\theoremstyle{plain}
\newtheorem{thm}{Theorem}[section]
\newtheorem{pro}[thm]{Proposition}
\newtheorem{lem}[thm]{Lemma}
\theoremstyle{definition}
\newtheorem{rem}[thm]{Remark}
\newenvironment{thm-A}
{\noindent{\bf Theorem A.--}\it}{\medskip}
\newenvironment{thm-M}
{\noindent{\bf Main Theorem.}\it}{\\}
\newenvironment{thm-AA}
{\noindent{\bf Theorem A'.}\it}{\\}
\newenvironment{thm-B}
{\noindent{\bf Theorem B.--}\it}{\medskip}
\newenvironment{thm-C}
{\noindent{\bf Theorem C.--}\it}{\\}
\newenvironment{thm-BP}
{\noindent{\bf Bell-Poonen Theorem.--}\it}{\\}
\newenvironment{thm-BB}
{\noindent{\bf Theorem B'.}\it}
\def\C{\mathbf{C}}
\def\R{\mathbf{R}}
\def\Z{\mathbf{Z}}
\def\Id{{\mathsf{Id}}}
\def\ii{{\sf{i}}}
\def\bbP{\mathbb{P}}
\def\Aut{{\sf{Aut}}}
\def\PGL{{\sf{PGL}}\,}
\def\PSL{{\sf{PSL}}\,}
\def\GL{{\sf{GL}}\,}
\def\Tr{{\mathrm{tr}}}
\def\SO{{\sf{SO}}\,}
\def\SU{{\sf{SU}}\,}
\def\U{{\sf{U}}\,}
\def\SL{{\sf{SL}}\,}
\def\End{{\sf{End}}\,}
\def\Hyp{{\mathbb{H}}}
\def\Aff{{\sf{Aff}}\,}
\def\SU{{\sf{SU}}\,}
\numberwithin{equation}{section}       
\begin{document}

\setlength{\baselineskip}{0.54cm}        
%
%
\title[Free actions on complex threefolds]
{Free actions of large groups on complex threefolds}
\date{2019/2020}
\author{Serge Cantat, Olga Paris-Romaskevich, Junyi Xie}
\address{Univ Rennes, CNRS, IRMAR - UMR 6625, F-35000 Rennes, France}
\email{serge.cantat@univ-rennes1.fr}
\email{ olga@pa-ro.net}
\email{ junyi.xie@univ-rennes1.fr}
%
%

%
%

%
%

\begin{abstract} 
We classify compact K\"ahler threefolds $X$ with a free group of automorphisms acting freely on $X$.
\end{abstract}

\maketitle

Recall that an action of a group $\Gamma$ on a set $X$ is \textbf{free} if the stabilizer of any point 
$x\in X$ is reduced to the trivial subgroup $\{\Id\}\subset \Gamma$. If $M$ is a compact complex
manifold, we denote by $\Aut(M)$ its group of automorphisms, i.e. of holomorphic diffeomorphisms.

\medskip

\begin{thm-A} \label{thm:App1}
Let $M$ be a compact K\"ahler manifold of dimension $\leq 3$. If $\Aut(M)$ 
contains a non-amenable subgroup $\Gamma$ acting freely on $M$, then $\dim(M)=3$ and
$M$ is a compact torus $\C^3/\Lambda$.
\end{thm-A}

Moreover, there are examples of non-abelian free groups acting freely on some tori of dimension $3$. First, we 
look at low dimensional tori, then we construct such examples, and we conclude with a proof of Theorem~A.
In Theorem~A, we assume $M$ to be smooth: we refer to Remark~\ref{rem:singular_case} for an extension to singular varieties. 
Section~\ref{par:SL2} gives  counter-examples when $M$ is not K\"ahler. 

\section{Compact tori} Let $A=\C^n/\Lambda$ be a compact torus of dimension $n$.
Every automorphism $f\colon A\to A$
comes from an affine transformation 
\begin{equation}
{\hat{f}}(z)=L(f)(z)+T(f)
\end{equation}
of $\C^n$, where the translation part is a vector $T(f)\in \C^n$ and the linear part $L(f)\in \GL_n(\C)$
preserves the lattice $\Lambda$. This defines a homomorphism $\Aut(A)\to \GL_n(\C)$, $f\mapsto L(f)$.
The following assertions are equivalent:
\begin{itemize}
\item[(i)] $f$ has no fixed point;
\item[(ii)] the image of the linear transformation $(L-{\Id})$ does not intersect the set $T(f)+\Lambda$.
\end{itemize}
In particular, if $\Gamma$ acts freely on $A$, then $\det(L(f)-\Id)=0$ for every $f$ in $\Gamma$. 

\begin{lem}\label{lem:tori-dim2}
Let $A$ be a complex torus of dimension $\leq 2$. If $\Gamma\subset \Aut(A)$ acts freely on $A$, 
then $\Gamma$ is solvable. In particular, every free subgroup of $\Aut(A)$ 
acting freely on $A$ is cyclic.
\end{lem}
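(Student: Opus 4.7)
The plan is to analyse the linear-part homomorphism $L \colon \Aut(A) \to \GL_n(\C)$, whose kernel consists of translations and is therefore an abelian subgroup of $\Aut(A)$. Since an extension of a solvable group by an abelian group is solvable, it will be enough to show that the image $G := L(\Gamma) \subset \GL_n(\C)$ is solvable. The key input is the remark recalled in the excerpt: the freeness of the action forces $\det(L(f) - \Id) = 0$ for every $f \in \Gamma\setminus\{\Id\}$, so \emph{every element of $G$ has $1$ as an eigenvalue}.

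In dimension $n = 1$ this immediately gives $L(f) = 1$ for every $f \in \Gamma$, so $\Gamma$ consists of translations and is abelian. In dimension $n = 2$, I would then focus on the derived subgroup $H := [G, G]$. Any element $h \in H$ is a product of commutators, so $\det(h) = 1$; but $h$ also lies in $G$, so it has $1$ as an eigenvalue, and the other eigenvalue of $h \in \SL_2(\C)$ is thus also equal to $1$. Hence every element of $H$ is either the identity or a non-trivial unipotent matrix.

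The crucial step is to prove that such a subgroup $H \subset \SL_2(\C)$ consisting of unipotents is abelian. Concretely, two non-trivial unipotent elements $u, v \in \SL_2(\C)$ fixing distinct lines in $\C^2$ cannot both lie in a group of unipotents: after a common conjugation one may assume $u = \bigl(\begin{smallmatrix} 1 & 1 \\ 0 & 1 \end{smallmatrix}\bigr)$ and $v = \bigl(\begin{smallmatrix} 1 & 0 \\ a & 1 \end{smallmatrix}\bigr)$ with $a \neq 0$, and then a direct computation gives $\Tr(uv) = 2 + a \neq 2$, so $uv$ is not unipotent. Consequently all non-trivial elements of $H$ share a single common fixed line in $\C^2$, and $H$ is conjugate into the abelian group of upper triangular unipotent matrices. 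Therefore $G$ is metabelian, $\Gamma$ is solvable, and the final assertion follows because a free group that is solvable must be cyclic (a free group of rank $\geq 2$ contains the non-solvable group $F_2$, whose derived series does not terminate).

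The only nontrivial content lies in the last observation about two unipotents in $\SL_2(\C)$; everything else is an unwinding of the linear-part map and the fixed-point-free hypothesis. The main obstacle I anticipate is making sure that the step $[G,G] \subset \SL_2(\C)$ really does consist of unipotent (and not merely $1$-eigenvalue) elements, which is exactly where the determinant-of-a-commutator remark is used.
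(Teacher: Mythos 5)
Your proof is correct and follows essentially the same route as the paper: reduce to the linear-part image $L(\Gamma)\subset\GL_2(\C)$ and exploit that freeness forces $\det(L(f)-\Id)=0$, i.e.\ every element has $1$ as an eigenvalue. The only difference is that the paper simply asserts that such a subgroup of $\GL_2(\C)$ is solvable, whereas you supply the missing verification (commutators are unipotent, and a group of unipotents in $\SL_2(\C)$ shares a fixed line, hence is abelian), and your argument for that step is sound.
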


\begin{proof}
If $\dim(A)=1$, then $\Aut(A)$ is solvable. 
Assume $\dim(A)=2$, and write $A=\C^2/\Lambda$. A subgroup $G$ of $\GL_2(\C)$
such that $\det(L-\Id)=0$ for every $L\in G$ is solvable; hence, the equivalence of (i) and (ii)
implies: if $\Gamma$ acts freely 
on $A$, the groups $L(\Gamma)$ and $\Gamma$ are solvable.
\end{proof}

\section{Examples} 

\subsection{Closed, real analytic manifolds}\label{par:example_real_analytic}

The group $\SO_3(\R)$ contains a non-abelian free group $\Gamma\subset \SO_3(\R)$.  
This is well known, since the existence of such a group  
is at the basis of the Banach-Tarsky paradox (see~\cite{delaHarpe:Book}). Now, the action of $\Gamma$ on 
$\SO_3(\R)$ by left translations is free, and going to the universal cover $\SU_2$
of $\SO_3(\R)$, we obtain a free action of a non-abelian free group on a simply connected 
manifold. These actions being real analytic, the first assertion of the following theorem is proved. 

\begin{thm} There are real analytic, free actions of non-abelian free groups on the following
real analytic manifolds:
\begin{enumerate}
\item the simply connected, compact Lie group $\SU_2$;
\item the torus $\R^k/\Z^k$ for any $k\geq 2$.
\end{enumerate}
\end{thm}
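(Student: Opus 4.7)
I split the plan into the two constructions.

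Part (1) ($\SU_2$). The plan is to use a left-translation action. I lift the non-abelian free subgroup $\Gamma\subset\SO_3(\R)$ recalled just before the theorem through the $2$-to-$1$ universal covering $\pi\colon\SU_2\to\SO_3(\R)$: since $\ker\pi=\{\pm\Id\}$ is central, a subgroup of index at most $2$ in $\pi^{-1}(\Gamma)$ is again non-abelian free, giving $\tilde\Gamma\subset\SU_2$. (Alternatively, I can produce $\tilde\Gamma$ directly by a ping-pong argument applied to two unit quaternions with generic rotation axes.) The action $L_g(h)=gh$ of $\tilde\Gamma$ on $\SU_2$ is then free, since any group acts freely on itself by translation ($L_g(h)=h$ forces $g=\Id$), and real-analytic because multiplication in a Lie group is real-analytic.

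Part (2) ($\T^k=\R^k/\Z^k$, $k\geq 2$). I first reduce to the case $k=2$: given a real-analytic free action of a non-abelian free group $F$ on $\T^2$, I extend it to $\T^k=\T^2\times\T^{k-2}$ by letting $F$ act trivially on the second factor. The extended action is still real-analytic, and freeness survives because a fixed point on $\T^k$ would project to a fixed point on the $\T^2$ factor.

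It remains to construct the action on $\T^2$, and this is the main obstacle. A purely affine approach inside $\Aff(\T^2)=\GL_2(\Z)\ltimes\T^2$ is doomed: if $M\in\GL_2(\Z)$ satisfies $\det(M-\Id)\neq 0$, then $x\mapsto(M-\Id)x$ is a surjective self-covering of $\T^2$, so the affine map $x\mapsto Mx+v$ has a fixed point for every translation $v$; but any non-abelian free subgroup of $\GL_2(\Z)$ must contain such hyperbolic matrices. My plan is therefore to use genuinely non-affine real-analytic diffeomorphisms, for instance the time-one flows $f=\Phi_1^{X_1},\;g=\Phi_1^{X_2}\in\Diff^{\omega}(\T^2)$ of two non-vanishing real-analytic vector fields $X_1,X_2$ chosen as small analytic perturbations of constant vector fields with Diophantine slopes. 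Each of $f,g$ is then automatically fixed-point free, a ping-pong argument in $\Diff^{\omega}(\T^2)$ should yield that $\langle f,g\rangle$ is non-abelian free, and a Herman/KAM-type argument should show that every non-trivial word in $f,g$ remains analytically conjugate to an irrational translation, hence fixed-point free. The technical heart of the proof is the balancing act at $k=2$: freeness of the action is easy for abelian rotation groups but becomes fragile under the non-commuting perturbations needed to destroy abelianness, and controlling this tension via rotation-number/KAM estimates on $\T^2$ is where the real work lies.
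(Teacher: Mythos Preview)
Your Part~(1) is correct and is exactly the paper's argument.

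Part~(2) has a genuine gap. The reduction to $k=2$ is fine, and your diagnosis that a naive affine approach inside $\GL_2(\Z)\ltimes\T^2$ fails is also correct. But what you offer for $k=2$ is not a proof: it is a programme. You write that a ping-pong argument ``should yield'' freeness of $\langle f,g\rangle$ and that a Herman/KAM-type argument ``should show'' that every non-trivial word remains conjugate to an irrational translation. Neither claim is justified, and the second is very delicate: KAM conjugacies are not uniform over the infinite family of words in $f$ and $g$, and there is no mechanism in your sketch to prevent some long word from developing a fixed point. As written, this is an open problem, not a proof.

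The paper sidesteps this entirely by changing the real-analytic model of $\T^2$. Realise the $2$-torus as the quotient $T=(\R^2\setminus\{0\})/\langle\Phi\rangle$ with $\Phi(x,y)=(2x,2y)$; this is real-analytically diffeomorphic to $\R^2/\Z^2$. The linear action of $\SL_2(\Z)$ on $\R^2\setminus\{0\}$ commutes with $\Phi$ and descends to $T$. Now take a Schottky free subgroup $\Gamma\subset\SL_2(\Z)$ in which every nontrivial element $\gamma$ is hyperbolic with real eigenvalues $\lambda(\gamma)>1>\lambda(\gamma)^{-1}$. An element $\gamma$ has a fixed point on $T$ iff some eigenvalue lies in $2^{\Z}$; but $\lambda(\gamma)$ is a quadratic unit, not a rational integer, so this never happens. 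Thus $\Gamma$ acts freely and real-analytically on $T\simeq\T^2$. Your affine obstruction does not apply here because this action is \emph{not} affine for the standard $\R^2/\Z^2$ structure; the Hopf model gives a different analytic chart in which the action becomes linear. (For $k=3$ the paper also gives a genuinely affine construction on $\R^3/\Z^3$ via Drumm--Goldman crooked planes and a Baire argument, but the $2$-dimensional case needs the Hopf trick.)
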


For the second assertion, it suffices to provide examples in dimension $k=2$; 
we refer to Remark~\ref{rem:Hopf_reel} for such an example.
Here, we describe an example in dimension $3$ which will work also for abelian threefolds.

\smallskip

Consider the lattice $\Z^3$, together with the standard quadratic form of
signature $(1,2)$: $Q(x,y,z)=x^2-y^2-z^2$. Its group of isometries $\SO_{1,2}(\Z)$ is a lattice in the
Lie group $\SO_{1,2}(\R)$. 

\begin{lem}
The group $\SO_{1,2}(\Z)$ contains a free subgroup $\Gamma$ of
 Schottky type such that the eigenvalues 
of every element $g\in \Gamma\setminus\{ \Id\}$ form a triple of real numbers 
$\lambda_g > 1 > \lambda_g^{-1}$.
\end{lem}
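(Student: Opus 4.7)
The plan is to exploit the fact that $\SO_{1,2}(\R)$ acts on the real hyperbolic plane $\Hyp^2$ (one sheet of $\{Q=1\}$ in $\R^3$), so that $\SO_{1,2}(\Z)$ becomes, up to finite index, commensurable to $\PSL_2(\Z)$. Isometries of $\Hyp^2$ are either elliptic, parabolic or hyperbolic. A direct diagonalization in a basis adapted to the translation axis shows that a hyperbolic element of $\SO_{1,2}(\R)$ is conjugate to $\mathrm{diag}(e^{\ell},1,e^{-\ell})$, where $\ell>0$ is its translation length: the eigenvalue $1$ corresponds to the spacelike direction orthogonal to the axial plane, and the pair $(e^{\ell},e^{-\ell})$ comes from the boost on that $(1,1)$-plane. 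So the desired $\Gamma$ will be obtained as soon as I produce a non-abelian free subgroup of $\SO_{1,2}(\Z)$ whose non-trivial elements are all hyperbolic.

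First I would exhibit two hyperbolic elements $h_1,h_2\in\SO_{1,2}(\Z)$; they abound because $\PSL_2(\Z)$ contains hyperbolic matrices in great numbers (any element with $|\Tr|>2$). After replacing $h_2$ by a suitable conjugate $\gamma h_2\gamma^{-1}$ with $\gamma\in\SO_{1,2}(\Z)$ if necessary, I arrange that the four fixed points of $h_1$ and $h_2$ on $\partial\Hyp^2$ are pairwise distinct.

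Then comes the Schottky ping-pong construction. Write $\xi_i^{\pm}\in\partial\Hyp^2$ for the attracting and repelling fixed points of $h_i$, and pick four pairwise disjoint closed neighbourhoods $D_i^{\pm}$ of them. Because $h_i$ has north-south dynamics on the circle $\partial\Hyp^2$, for every sufficiently large $N$ the powers $g_i:=h_i^N$ satisfy
\begin{equation*}
g_i^{\pm 1}\bigl(\partial\Hyp^2\setminus D_i^{\mp}\bigr)\subset D_i^{\pm}.
\end{equation*}
The classical ping-pong lemma then shows that $\Gamma:=\langle g_1,g_2\rangle$ is free of rank $2$ and of Schottky type; for every non-trivial reduced word $w$ in $g_1^{\pm 1},g_2^{\pm 1}$, iterating the inclusions provides $w$ with an attracting and a repelling fixed point on $\partial\Hyp^2$, so $w$ is hyperbolic and has the prescribed eigenvalue pattern $\lambda_w>1>\lambda_w^{-1}$.

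The main technical point will be checking the ping-pong inclusions for large $N$, which reduces to the uniform convergence of $h_i^{\pm N}$ to the constant map $\xi_i^{\pm}$ on compact subsets of $\partial\Hyp^2\setminus\{\xi_i^{\mp}\}$; this is a routine consequence of the diagonalizability of $h_i$. Once that is in place, everything else is formal ping-pong.
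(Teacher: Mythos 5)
Your proposal is correct and follows essentially the same route as the paper: both arguments take two loxodromic (hyperbolic) elements of $\SO_{1,2}(\Z)$ acting on the hyperbolic plane, pass to large powers, and apply the Fricke--Klein table-tennis (ping-pong) lemma to get a Schottky group all of whose non-trivial elements are loxodromic, hence have eigenvalues $\lambda_g>1>\lambda_g^{-1}$. The only point worth noting is that one should work in the subgroup of $\SO_{1,2}(\Z)$ preserving each sheet of $\{Q=1\}$ (as the paper does explicitly), which your choice of genuinely hyperbolic isometries of one sheet accomplishes implicitly.
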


\begin{proof}
Consider the subgroup $G_0$ of $\SO_{1,2}(\R)$ preserving each connected component
of $\{Q(x,y,z)=1\}$. If $g$ is an element of this group, and $g$ has an eigenvalue of
modulus $>1$, then its eigenvalues are $\lambda_g > 1 > \lambda_g^{-1}$ for some
$\lambda_g>1$; equivalently, $g$ is a loxodromic isometry of the hyperbolic
space ${\mathbb{H}}=\{(x,y,z)\vert \; Q(x,y,z)=1 \; {\text{and}} \; x>0\}$. 
Now, take two loxodromic isometries $f$ and $g$ in $G_0\cap \SO_{1,2}(\Z)$ to which the
tennis-table lemma of Fricke and Klein applies (see~\cite{delaHarpe:Book}). Then, the group $\Gamma$ generated by 
sufficiently large iterates of $f$ and $g$ is a Schottky group such that each of its element $g\neq \Id$ is loxodromic.
\end{proof}

Choose such a free group $\Gamma$, of rank $2$, and fix a pair $(a,b)$ of elements generating $\Gamma$. 
In \cite{Drumm-Goldman}, Drumm and Goldman 
find a non-empty open subset $\U$ of $\R^3\times \R^3$ such that for every $(s,t)\in \U$, the
affine transformations 
\begin{equation}
A_s(x,y,z)=a(x,y,z)+s, \quad B_t(x,y,z)=b(x,y,z)+t
\end{equation}
generate a free group acting freely and properly on $\R^3$. The group generated by $A_s$
and $B_t$ is an affine deformation $\Gamma_{s,t}$ of $\Gamma$; given any reduced word $w$ in $a$,
$b$, and their inverses, we get an element $w(A_s,B_t)$ in $\Gamma_{s,t}$, which we can write 
\begin{equation}
w(A_s,B_t)(x,y,z)=w(a,b)(x,y,z)+L_w(a,b)(s)+R_w(a,b)(t) 
\end{equation}
where $L_w(a,b)$ and $R_w(a,b)$ are elements of the algebra generated by $a$ and $b$ in $\End(\R^3)$.

Since $a$ and $b$ are in $\SL_3(\Z)$ the group $\Gamma_{s,t}$ acts on the torus $\R^3/\Z^3$. 
This action is free if, and only if, given any non-trivial reduced word $w$, and any element $(p,q,r)$ of 
the lattice $\Z^3$, the equation 
\begin{equation}\label{eq:DGFP}
(\Id-w(a,b))(x,y,z)+ (p,q,r)=L_w(a,b)(s)+R_w(a,b)(t) 
\end{equation}
has no solution $(x,y,z)\in \R^3$. Fix such a pair $(w,(p,q,r))$.  The question becomes: is the vector $L_w(a,b)(s)+R_w(a,b)(t)$
contained in the affine plane $(\Id-w(a,b))(\R^3)+ (p,q,r)$? We distinguish two cases. If $(\Id-w(a,b))(\R^3)+ (p,q,r)$ is actually
a vector subspace, in which case this subspace coincides with $(\Id-w(a,b))(\R^3)$, then we know from Drumm-Goldman 
result that there is a pair $(s,t)$ for which Equation~\eqref{eq:DGFP} has no solution. If $(\Id-w(a,b))(\R^3)+ (p,q,r)$ 
does not contain the origin, then there is no solution to Equation~\eqref{eq:DGFP} if $(s,t)$ is small enough. Thus, 
the set $W(w,(p,q,r))$
of parameters $(s,t)\in \R^3\times \R^3$ such that Equation~\eqref{eq:DGFP} has no solution is
non-empty; hence, $W(w,(p,q,r))$ is open and dense (as the complement of a proper affine subspace of $\R^3\times \R^3$). 
By Baire theorem,
the intersection of all those open dense subsets  
is non-empty: this precisely means that there are pairs $(s,t)$ such that the free group $\Gamma_{s,t}$ acts
freely on $\R^3/\Z^3$.

\subsection{Abelian threefolds} Consider any lattice $\Lambda_0\subset \C$, for instance the lattice
$\Lambda_0=\Z[\ii]$ with $\ii^2=-1$. Set
\begin{equation}
\Lambda=\Lambda_0\times \Lambda_0\times \Lambda_0 \subset \C^3
\end{equation}
and denote by $N$ the abelian threefold $\C^3/\Lambda$.
Now, copy the argument given in the last section (\S~\ref{par:example_real_analytic}), with the same group $\Gamma_{s,t}$, but
viewed as a subgroup of the affine group $\SL_3(\Z)\ltimes \C^3$, acting on $N=\C^3$. We get

\begin{thm}
Let $\Lambda_0\subset \C$ be a cocompact lattice.
There is a  free action of a non-abelian free group on the abelian threefold $(\C/\Lambda_0)^3$ by holomorphic affine transformations.
\end{thm}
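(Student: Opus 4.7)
The plan is to copy the argument of \S\ref{par:example_real_analytic}, but with complex translation parameters acting on $N=\C^3/\Lambda$ where $\Lambda=\Lambda_0^3$. I reuse the Schottky free group $\Gamma=\langle a,b\rangle\subset\SO_{1,2}(\Z)\subset\SL_3(\Z)$ constructed above: because $a$ and $b$ have integer entries, they extend to $\C$-linear automorphisms of $\C^3$ preserving $\Lambda$, and therefore descend to automorphisms of $N$. For any $(s,t)\in\C^3\times\C^3$, the holomorphic affine transformations $A_s(z)=a(z)+s$ and $B_t(z)=b(z)+t$ descend to $N$ and generate a holomorphic affine deformation $\Gamma_{s,t}$ of $\Gamma$; my goal is to find $(s,t)$ for which this action on $N$ is free (that $\Gamma_{s,t}$ is abstractly free of rank~$2$ for generic $(s,t)$ already follows from the real case).

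Fixed points of a word $w(A_s,B_t)$ on $N$ correspond to pairs $(z,\lambda)\in\C^3\times\Lambda$ with
\begin{equation}
(\Id-w(a,b))(z) + \lambda = L_w(a,b)(s) + R_w(a,b)(t),
\end{equation}
so freeness amounts to requiring, for every non-trivial reduced word $w$ and every $\lambda\in\Lambda$, that $L_w(a,b)(s)+R_w(a,b)(t)$ avoid the complex affine subspace $V_{w,\lambda}:=(\Id-w(a,b))(\C^3)+\lambda$. Write $W(w,\lambda)\subset\C^3\times\C^3$ for the parameter set on which this avoidance holds. I would show that each $W(w,\lambda)$ is open and dense and then invoke Baire's theorem on the countable family indexed by $(w,\lambda)$. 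Non-emptiness splits, exactly as in \S\ref{par:example_real_analytic}, into two subcases: if $\lambda\in(\Id-w(a,b))(\C^3)$, the Drumm--Goldman result yields a real pair $(s_0,t_0)\in\R^3\times\R^3$ with $L_w(a,b)(s_0)+R_w(a,b)(t_0)\notin(\Id-w(a,b))(\R^3)$; and if $\lambda\notin(\Id-w(a,b))(\C^3)$, the subspace $V_{w,\lambda}$ misses the origin and all sufficiently small $(s,t)$ lie in $W(w,\lambda)$.

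The one point that is not a verbatim transcription of \S\ref{par:example_real_analytic} is the check, in the first subcase above, that the real Drumm--Goldman witness vector escapes not only the real image $(\Id-w(a,b))(\R^3)$ but also its complexification $(\Id-w(a,b))(\C^3)$. This is where I expect the main care to be needed, but it reduces to the elementary identity $(\Id-w(a,b))(\C^3)\cap\R^3=(\Id-w(a,b))(\R^3)$, which holds because $w(a,b)$ has integer, hence real, entries. With that established, each $W(w,\lambda)$ is the complement of a proper complex affine subspace of $\C^3\times\C^3$, so open and dense, and Baire's theorem produces a parameter $(s,t)$ for which $\Gamma_{s,t}$ acts freely on $(\C/\Lambda_0)^3$ by holomorphic affine transformations.
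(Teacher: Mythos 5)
Your proposal is correct and follows essentially the same route as the paper, whose entire proof is the instruction to ``copy the argument'' of the real-analytic section for the group viewed in $\SL_3(\Z)\ltimes\C^3$. You carry out that transcription faithfully, and the one place where you add something---checking via $(\Id-w(a,b))(\C^3)\cap\R^3=(\Id-w(a,b))(\R^3)$ that the real Drumm--Goldman witness also escapes the complexified image---is exactly the detail the paper leaves implicit, and your justification of it is correct.
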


\section{Proof of Theorem~A} 

We now prove Theorem~A. According to \cite{Dinh:Tits, Cantat:Annals}, the group $\Aut(M)$ 
satisfies Tits alternative: {\sl{if $\Gamma\subset \Aut(M)$ does not contain a non-abelian free group, then $\Gamma$ contains a solvable
subgroup of finite index and, in particular, is amenable}}. Thus, we  assume that $\Gamma$ is a non-abelian free group, 
acting freely on 
some compact K\"ahler manifold $M$ of dimension $\leq 3$. We shall use several times the following fact. 

\begin{lem}\label{lem:rest-free}
If the group $\Gamma$ stabilizes a subset $S\subset M$, the 
restriction $f\in \Gamma\to f_{\vert S}$ is an injective morphism, and the action 
of $\Gamma$ on $S$ is free. If the action of a finite index subgroup $\Gamma_0\subset \Gamma$
lifts to an action on a finite cover $M'\to M$, then the action of $\Gamma_0$ on $M'$ is free.\end{lem}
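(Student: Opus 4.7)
The plan is to reduce both assertions to direct consequences of the freeness of $\Gamma$ on $M$; no serious obstacle is anticipated, since the lemma is essentially a bookkeeping statement.

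For the first assertion, I would first verify freeness of the restricted action: if $f \in \Gamma$ fixes a point $s \in S$, then $s$ is a fixed point in $M$, and freeness of $\Gamma$ on $M$ forces $f = \Id$. Injectivity of the restriction $f \mapsto f_{\vert S}$ then follows at once: if $f_{\vert S} = g_{\vert S}$ and $S$ is non-empty, then $fg^{-1}$ fixes every point of $S$, so by the freeness just proved $fg^{-1} = \Id$, i.e.\ $f = g$.

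For the second assertion, let $\pi \colon M' \to M$ denote the finite covering and write $\tilde{f} \in \Aut(M')$ for the lift of $f \in \Gamma_0$, so that $\pi \circ \tilde{f} = f \circ \pi$ and $f \mapsto \tilde{f}$ is a homomorphism $\Gamma_0 \to \Aut(M')$. If $\tilde{f}(x') = x'$ for some $x' \in M'$, then $f(\pi(x')) = \pi(x')$; applying freeness of $\Gamma$ on $M$ (legitimate since $\Gamma_0 \subset \Gamma$) gives $f = \Id_M$, which means that $f$ is the identity element of $\Gamma_0$, and hence $\tilde{f} = \Id_{M'}$ by the homomorphism property.

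The one subtle point I would take care to flag explicitly is the final implication: a priori, an arbitrary lift of $\Id_M$ to $M'$ need not be $\Id_{M'}$ (it could be a non-trivial deck transformation), so it is essential that $\tilde{f}$ is not some unspecified lift but the value at $f$ of a given homomorphism $\Gamma_0 \to \Aut(M')$, which must then send the identity of $\Gamma_0$ to the identity of $\Aut(M')$.
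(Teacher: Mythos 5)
Your proof is correct. The paper actually states this lemma without any proof, treating it as evident, and your argument is exactly the standard verification one would supply: freeness on $S$ follows because a fixed point in $S$ is a fixed point in $M$, injectivity follows from freeness applied to $g^{-1}f$ (for non-empty $S$), and for the covering statement you rightly isolate the only delicate point, namely that the lift of $\Id_M$ is $\Id_{M'}$ because the lifting is given as a homomorphism $\Gamma_0\to\Aut(M')$ rather than as an arbitrary choice of lifts.
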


\subsection{Kodaira dimension} If $M$ is a complex manifold, we denote by $K_M$ its canonical bundle.

\begin{lem}\label{lem:kod-inv-Z} Let $M$ be a compact K\"ahler manifold, and let $\Gamma$ be a subgroup of $\Aut(M)$.
If the Kodaira dimension of $M$ is non-negative, either $K_M$ is torsion, or there is a finite index subgroup $\Gamma_1$ of $\Gamma$ 
and a $\Gamma_1$-invariant, proper, non-empty, and irreducible Zariski closed subset $Z\subset M$.
\end{lem}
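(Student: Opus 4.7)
The plan is to exploit the canonical action of $\Aut(M)$ on the pluricanonical vector spaces $H^0(M,mK_M)$, which are intrinsically attached to the complex manifold $M$. Since $\kappa(M)\geq 0$, fix $m\geq 1$ with $h^0(M,mK_M)\geq 1$. Throughout I will use the standard reduction: given any $\Aut(M)$-invariant (or $\Gamma_0$-invariant for a finite-index subgroup $\Gamma_0\subset\Gamma$) proper non-empty Zariski closed subset $Y\subset M$, an irreducible component $Z$ of $Y$ is fixed by the stabilizer of $Z$ in $\Gamma_0$, which has finite index since $\Gamma_0$ permutes the finitely many components of $Y$. Hence it is enough to produce any such invariant closed subset. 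Two canonical candidates are $\Aut(M)$-invariant by construction: the fixed component of $|mK_M|$ (the largest effective divisor contained in every element of the linear system) and the base locus $\mathrm{Bs}|mK_M|$. If either is a non-empty proper subset of $M$, we are done.

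If $\kappa(M)=0$, then $h^0(mK_M)\leq 1$ for every $m$, so for the chosen $m$ the linear system $|mK_M|=\{D\}$ consists of a single divisor $D$, automatically $\Aut(M)$-invariant by uniqueness. Either $D=0$, in which case $mK_M\simeq \mathcal{O}_M$ and $K_M$ is torsion; or $D\neq 0$ is a non-empty proper invariant divisor, and the reduction above concludes.

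If $\kappa(M)\geq 1$, pick $m$ large and divisible enough that $\phi_m\colon M\dashrightarrow Y\subset \mathbb{P}^N$ is the Iitaka fibration, with $\dim Y=\kappa(M)\geq 1$; after passing to a smooth bimeromorphic model if needed, and using the fixed-component/base-locus reduction above, we may assume $|mK_M|$ is base-point free so $\phi_m$ is a morphism. The $\Aut(M)$-action on $M$ then descends to an algebraic action on $Y$ through the linear action on $H^0(M,mK_M)^*$. By Kawamata's theorem on the base of the Iitaka fibration, $Y$ is of general type (after replacing by a smooth birational model), so by Matsumura's finiteness theorem $\Aut(Y)$ is finite. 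The kernel of $\Aut(M)\to\Aut(Y)$ therefore meets $\Gamma$ in a finite-index subgroup $\Gamma_1$ that preserves every fiber of $\phi_m$; any such fiber $\phi_m^{-1}(y)$ is then a proper non-empty $\Gamma_1$-invariant closed subset, and the general reduction yields the desired $Z$.

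The main obstacle is the last step, namely controlling the $\Aut(M)$-action on the Iitaka base when $1\leq \kappa(M)<\dim M$. This rests on the non-trivial theorem of Kawamata that the base of the Iitaka fibration is of general type (via Viehweg-type positivity of direct images of relative pluricanonical sheaves), combined with the finiteness of automorphism groups of varieties of general type. Some care is required to transport these statements to the compact Kähler setting, but because $Y$ sits as a projective subvariety of $\mathbb{P}^N$ through the pluricanonical embedding, the projective versions apply directly.
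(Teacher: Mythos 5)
Your overall architecture coincides with the paper's: both proofs run the Iitaka fibration, use finiteness of the induced action on its base to make a finite-index subgroup preserve each fiber when $\kappa(M)\geq 1$, and in the case $\kappa(M)=0$ use the unique divisor in $|mK_M|$, whose emptiness is exactly the torsion alternative. Your preliminary reductions (passing to an irreducible component whose stabilizer has finite index; observing that a non-empty base locus or fixed component already finishes the proof) are fine and consistent with what the paper does implicitly.

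There is, however, one genuine gap, and it sits precisely at the step you yourself identify as the main obstacle. The assertion that ``by Kawamata's theorem the base $Y$ of the Iitaka fibration is of general type'' is false. A properly elliptic surface fibered over $\mathbb{P}^1$ (with $\kappa=1$) has Iitaka base $\mathbb{P}^1$; an elliptic Calabi--Yau threefold over $\mathbb{P}^2$ gives a threefold example. What the canonical bundle formula of Kawamata (and Fujino--Mori) actually yields is that $K_Y+B_Y+M_Y$ is big for suitable discriminant and moduli divisors, i.e.\ $Y$ is of \emph{log} general type for a boundary that is canonically attached to the fibration; to extract finiteness one must check that the induced automorphisms preserve this boundary and then invoke the log version of Matsumura/Iitaka finiteness. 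Your fallback --- that the action on $Y$ factors through the linear action on $H^0(M,mK_M)^*$ --- does not rescue the argument by itself either: the stabilizer in $\PGL_{N+1}(\C)$ of a rational normal curve is all of $\PGL_2(\C)$, so preserving $Y\subset\mathbb{P}^N$ projectively does not force finiteness. The clean way to close the gap is the route the paper takes: quote directly Ueno's theorem (Theorem 14.10 in his book, extended to the K\"ahler/class $\mathcal{C}$ setting by Nakayama--Ueno and Fujino) that the image of ${\sf{Bim}}(M)$ --- in particular of $\Aut(M)$ --- in the automorphisms of the Iitaka base is finite; equivalently, the pluricanonical representation on $H^0(M,mK_M)$ has finite image. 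With that substitution the rest of your argument goes through.
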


\begin{proof}
 If the Kodaira dimension of $M$ is non-negative, the Kodaira-Iitaka fibration 
provides a surjective morphism $\pi\colon M\to B$ such that: 
\begin{itemize}
\item[(1)] $\pi$ is $\Aut(M)$-equivariant: there is a homomorphism $\rho\colon f\in \Aut(M)\mapsto f_B\in \Aut(B)$ such that 
$f_B\circ \pi = \pi\circ f$;
\item[(2)] the image $\rho(\Aut(M))\subset \Aut(B)$ is a finite group. (see \cite{Ueno}).
\end{itemize}
Thus, a finite index subgroup of $\Gamma$ fixes individually every fiber of $\pi$. If $\dim(B)\geq 1$, 
we take $Z$ to be an irreducible component of some fiber, and $\Gamma_1$ the finite index subgroup of $\Gamma$
that preserves the fiber, as well as every irreducible component of this fiber. 
If $\dim(B)=0$, the Kodaira dimension of $M$ is $0$, and we can fix an integer $d>0$ such that $H^0(M,K_M^{\otimes d})=\C\Omega$ for some
non-trivial section $\Omega$ of $K_M^{\otimes d}$.  There is a homomorphism $\xi\colon \Gamma\to \C^*$
such that 
$f^*\Omega =\xi(f)\Omega
$ for every $f\in \Gamma$. In particular, the divisor $(\Omega)_0$ is $\Gamma$-invariant. If this divisor is empty, 
 then $K_M^{\otimes d}$ is the trivial bundle. If not, we define $Z$ to be an irreducible component of $(\Omega)_0$. 
 \end{proof}

\subsection{Curves and surfaces} 

\begin{lem}\label{lem:inv-curve}
If a free group  acts freely on a curve, the group is cyclic. If $\dim(M)\leq 2$ and $\Gamma$ is a non-abelian free group acting freely on $M$, 
then $\Gamma$
does not stabilize any proper, non-empty, Zariski closed subset. 
\end{lem}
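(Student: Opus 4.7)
The plan is to prove the two assertions in sequence, the second reducing to the first. For the first assertion, I would first pass to the normalization of an irreducible component of the curve, on which the induced action is still free (a lift of an automorphism with no fixed points on the component has no fixed points on the normalization), and then classify by genus. If the curve is $\bbP^1$, every element of $\PGL_2(\C)$ has a fixed point, so the free action is trivial. If the genus is at least two, $\Aut(C)$ is finite by Hurwitz's theorem, and a free subgroup of a finite group is trivial. If the genus equals one, the automorphism group of the elliptic curve is an extension of a finite group by its translation subgroup, and a direct computation shows that any automorphism $z\mapsto \alpha z + t$ with $\alpha\neq 1$ admits a fixed point. Thus the acting group is contained in the abelian translation subgroup, and a free abelian group is cyclic.

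For the second assertion, suppose for contradiction that the non-abelian free group $\Gamma$ stabilizes a proper nonempty Zariski closed subset $Z\subset M$. By Lemma~\ref{lem:rest-free}, the restriction $\Gamma\to \Aut(Z)$ is injective and the action on $Z$ is free. Since $\dim Z < \dim M \leq 2$, two cases arise. If $\dim Z = 0$, the set $Z$ is finite and the $\Gamma$-orbit of any point has cardinality $|\Gamma|$, forcing $\Gamma$ to be finite and contradicting its non-abelian freeness. If $\dim Z = 1$, the group $\Gamma$ permutes the finitely many irreducible components of $Z$, so a finite index subgroup $\Gamma_0\leq \Gamma$ stabilizes each component. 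By the Nielsen--Schreier theorem together with the Schreier index formula, $\Gamma_0$ is again a non-abelian free group; by Lemma~\ref{lem:rest-free} it acts freely on any stabilized component; the first assertion then forces $\Gamma_0$ to be cyclic, a contradiction.

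The main obstacle, modest as it is, lies in the genus-one case of the first assertion, which requires the specific observation that only translations can be fixed-point-free on an elliptic curve. The remaining ingredients are elementary facts about subgroups of free groups together with repeated use of Lemma~\ref{lem:rest-free}.
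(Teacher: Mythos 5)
Your proof is correct and follows essentially the same route as the paper: rule out $\bbP^1$ because every automorphism has a fixed point, handle higher genus via the structure of the automorphism group, and deduce the second assertion from the first via Lemma~\ref{lem:rest-free} after passing to a finite-index subgroup stabilizing each component. The only cosmetic difference is in genus one, where the paper simply invokes that $\Aut$ of a positive-genus curve is virtually solvable (so any free subgroup is cyclic), whereas you make the sharper observation that a fixed-point-free automorphism of an elliptic curve must be a translation; both arguments are fine.
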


\begin{proof} Since every automorphism of $\bbP^1(\C)$ has a fixed point, we can assume that the genus of the curve 
is at least $1$, but then its automorphism group is virtually solvable, and any free subgroup is cyclic. The second 
assertion follows from the first one and from Lemma~\ref{lem:rest-free}.
\end{proof}

\begin{lem}\label{lem:surfaces-free}
If a free group $\Gamma$ acts freely on a compact, K\"ahler surface, the group is cyclic. 
\end{lem}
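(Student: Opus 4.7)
The plan is to suppose, for contradiction, that $\Gamma$ contains a non-abelian free subgroup $\Gamma'$ acting freely on $M$, and to derive a contradiction by running through the Enriques--Kodaira classification of $M$. A guiding observation is that every finite index (or more generally, every non-trivial normal) subgroup of $\Gamma'$ is again non-abelian free, still acts freely on $M$ by Lemma~\ref{lem:rest-free}, and hence by Lemma~\ref{lem:inv-curve} cannot stabilize any proper, non-empty Zariski closed subset of $M$.

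First I would handle the cases $\kappa(M) \geq 0$. If $\kappa(M) = 2$, then $\Aut(M)$ is finite and there is nothing to do. If $\kappa(M) = 1$, then $K_M$ is not torsion, so Lemma~\ref{lem:kod-inv-Z} produces a proper closed subset stabilised by a finite index subgroup of $\Gamma'$, contradicting the observation. If $\kappa(M) = 0$, then $K_M$ is torsion; passing to the étale cover $M' \to M$ that trivialises $K_M$ and lifting the action of a finite index subgroup via Lemma~\ref{lem:rest-free}, the classification leaves two sub-cases: $M'$ is abelian or K3. The abelian case is ruled out by Lemma~\ref{lem:tori-dim2}. For the K3 sub-case, I would fix a nowhere-vanishing holomorphic $2$-form $\Omega$ and the character $\xi$ on the lifted subgroup defined by $f^*\Omega = \xi(f)\Omega$; the holomorphic Lefschetz fixed-point formula on a K3 reads $L^{\mathrm{hol}}(f) = 1 + \overline{\xi(f)}$, so any fixed-point-free $f$ satisfies $\xi(f) = -1$. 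Applied to both $f$ and $f^2$ (both non-trivial since free groups are torsion-free, and both fixed-point-free by hypothesis), this yields $-1 = \xi(f^2) = \xi(f)^2 = 1$, a contradiction.

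Finally, for $\kappa(M) = -\infty$, $M$ is rational or ruled. If $M$ is ruled over a curve $C$ with $g(C) \geq 1$, the Albanese factorisation $M \to C$ is $\Aut(M)$-equivariant; since $\Aut(C)$ is virtually solvable, the kernel of $\Gamma' \to \Aut(C)$ is a non-trivial normal, hence non-abelian free, subgroup that acts trivially on $C$ and therefore preserves every fiber, contradicting the observation. If $M$ is rational, then $h^p(\mathcal{O}_M) = 0$ for $p \geq 1$, so the holomorphic Lefschetz number of every automorphism equals $1 \neq 0$, which rules out any fixed-point-free non-trivial action.

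The step I expect to be the main obstacle is the K3 sub-case: all other configurations reduce directly to lemmas already established in the paper, whereas K3 requires importing the holomorphic Lefschetz fixed-point formula together with the character on $H^{2,0}$, and one must exploit the torsion-freeness of free groups to iterate via $f^2$. A subsidiary care-point is tracking non-abelian freeness under passage to finite index subgroups (Nielsen--Schreier) and to kernels of maps to virtually solvable groups (using that non-trivial normal subgroups of a non-abelian free group are non-cyclic, hence non-abelian free).
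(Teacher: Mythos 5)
Your proof is correct, but it is organized differently from the paper's and handles the hardest sub-case by a different device. The paper splits on $h^{2,0}(M)>0$ versus $h^{2,0}(M)=0$ rather than on the Kodaira dimension: when $h^{2,0}>0$, Lemmas~\ref{lem:kod-inv-Z} and~\ref{lem:inv-curve} force $K_M$ to be \emph{trivial} (not merely torsion), so only K3 surfaces and tori survive, and no \'etale cover is needed; when $h^{2,0}=0$, a single Albanese argument simultaneously disposes of Enriques, bielliptic, rational, ruled, and the $p_g=0$ surfaces of Kodaira dimension $1$ or $2$, whereas you treat these across three separate branches of the trichotomy $\kappa\in\{-\infty,0,1\ \text{or}\ 2\}$. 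Your branches all close correctly: the $\kappa=1$ and $\kappa=2$ cases via Lemma~\ref{lem:kod-inv-Z} and finiteness of $\Aut$, the ruled case by the same Albanese/kernel argument the paper uses, and the rational case by the vanishing $h^{0,1}=h^{0,2}=0$ plus the Atiyah--Bott formula. The genuinely different ingredient is the K3 sub-case: the paper invokes Fuller's topological theorem (positive Euler characteristic forces a periodic point for any homeomorphism), while you use the holomorphic Lefschetz number $1+\xi(f)^{-1}$ attached to the character $\xi$ on $H^{2,0}$, forcing $\xi(f)=-1$ and then contradicting this with $f^2$ (legitimate since free groups are torsion-free). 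Your version is more self-contained and purely holomorphic; the paper's is shorter but imports an external topological result.

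One small point worth making explicit in your $\kappa=0$ branch: Lemma~\ref{lem:rest-free} only asserts that a lifted action is free, not that the lift exists. You should add a sentence noting that the canonical cover trivializing $K_M$ is characteristic, so $\Gamma'$ fits into an extension of itself by the finite deck group over $M'$, and such an extension splits because free groups are projective; hence $\Gamma'$ itself (not just a finite-index subgroup) lifts. This is standard and does not affect the validity of the argument.
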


\begin{proof} We argue by contradiction, assuming that $\Gamma$ is not cyclic. 

If $h^{2,0}(M)>0$, Lemmas~\ref{lem:kod-inv-Z} 
and~\ref{lem:inv-curve} imply that $K_M$ is trivial.
If $M$ is  a K3 surface, its Euler characteristic is positive, every homeomorphism 
of $M$ has a periodic point \cite{Fuller}, and we get a contradiction. 
If $M$ is a torus, we get a contradiction from Lemma~\ref{lem:tori-dim2}.  
This exhausts all  surfaces with $K_M$ trivial.

Now, assume that $h^{2,0}(M)=0$. Then, from the holomorphic Lefschetz fixed point formula, we must have $h^{1,0}(M)>0$, since otherwise
every automorphism would have a fixed point.
Consider the Albanese morphism $\alpha\colon M\to A_M$, where $A_M$ is the Albanese torus of $M$, and let
$E$ be the image of $\alpha$. Since $h^{1,0}(M)>0$, we get $\dim(E)\in\{1,2\}$.
By Lemma~\ref{lem:rest-free} and~\ref{lem:inv-curve}, every proper  $\Gamma$-invariant analytic subset of $M$ (resp. of $E$) is empty. Thus, $E$ 
is smooth and $\alpha$ is a submersion. First, assume $\dim(E)=1$. 
Since $E\subset A_M$ can not be the Riemann sphere, its group of automorphisms is virtually solvable; thus, the kernel of the homomorphism
$\Gamma\to \Aut(E)$ is a non-abelian free group, acting freely on the fibers of $\alpha$, contradicting Lemma~\ref{lem:inv-curve}. Thus, $\dim(E)=2$ and $\alpha\colon M\to E$ is a finite cover. This implies $h^{2,0}(M)>0$, and we get a contradiction.
\end{proof}

\begin{rem}\label{rem:singular_case}
Let $M$ be a singular complex projective threefold. Denote by $M^0$ 
the singular locus of $M$, by $M^1$ the singular locus of $M^0$, etc. 
Let $\Gamma$ be a non-abelian free group acting freely on $M$ by automorphisms. Then, $\Gamma$ preserves
the subvarieties $M^i$. Since $\Gamma$ is not cyclic, we deduce from Lemma~\ref{lem:inv-curve}
that none of these spaces has dimension $0$ or $1$. So, the singular locus $M^0$ is a smooth complex projective surface, 
and Lemma~\ref{lem:surfaces-free} provides a contradiction. So, Theorem~A extends to the case of complex projective threefolds.
\end{rem}

\subsection{Dimension $3$}  Assume $\dim(M)=3$, $M$ is not a torus, and a non-abelian free group $\Gamma$ acts freely on $M$.

\smallskip

\noindent {\bf{a. ${\text{kod}}(M)\geq 0$. --}} First, assume that the Kodaira dimension of $M$ is non-negative. It follows from Lemmas~\ref{lem:kod-inv-Z} to~\ref{lem:surfaces-free} that
$K_M$ is torsion. Then, after a finite \'etale cover, $K_M$ is trivial and $M$ is one of the following examples:
\begin{enumerate}
\item a torus of dimension $3$;
\item a (simply connected) Calabi-Yau threefold;
\item a product of an elliptic curve with a K3 surface.
\end{enumerate}

\begin{lem}\label{lem:finite-cover-torus}
If a finite cover of $M$ is a torus, and $\Gamma\subset \Aut(M)$ is a non-abelian free group  acting freely on $M$,
then $M$ is a torus.
\end{lem}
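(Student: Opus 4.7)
The plan is to lift the action of $\Gamma$ to a free action on a torus cover $A$ of $M$ and to use the linear part map to reduce, via the Bieberbach structure, to a contradiction with Lemmas~\ref{lem:inv-curve} and~\ref{lem:tori-dim2}.

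First I would replace the given finite cover by its Galois closure, so that $\pi\colon A\to M$ is a finite \'etale Galois cover with $A=\C^3/\Lambda$ a torus of dimension~$3$ and Galois group $G$. Automorphisms of $M$ lift to automorphisms of $A$ normalising $G$, and since $\Gamma$ is free the pullback extension $1\to G\to \tilde\Gamma\to \Gamma\to 1$ splits; after passing to a finite index subgroup $\Gamma_1$ of the splitting that centralises $G$ (possible because $\Aut(G)$ is finite) one obtains the direct product $G\times \Gamma_1\subset \Aut(A)$, which acts freely on $A$. Writing $L\colon \Aut(A)\to \GL_3(\Z)$ for the linear part homomorphism, the kernel of $L|_{\Gamma_1}$ is an abelian normal subgroup of the non-abelian free group $\Gamma_1$, hence trivial; in particular $L(\Gamma_1)$ is itself non-abelian free.

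The goal is $L(G)=\{\Id\}$, which is equivalent to $M=A/G$ being a torus. Arguing by contradiction I pick $g_0\in G$ with $L(g_0)\neq \Id$; because $g_0$ acts freely and is not a translation, $\det(L(g_0)-\Id)=0$, so the $\Q$-rational subspace $V_1:=\ker(L(g_0)-\Id)\subset \C^3$ has complex dimension $1$ or $2$ and integrates to a complex subtorus $B_1\subset A$. Both $V_1$ and its complement $V'_1$ are $L(\Gamma_1)$-invariant (commutation with $L(g_0)$), so the $\Gamma_1$-action descends to $A/B_1$, and the induced endomorphism of $L(g_0)$ on $V'_1$ has no $1$-eigenvector. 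If $\dim V_1=1$, the action of $L(\Gamma_1)$ on $V_1$ is a character $\chi\colon \Gamma_1\to \C^*$ whose kernel $\Gamma_2$ contains the commutator subgroup of $\Gamma_1$ and is therefore still non-abelian free. The fixed set of $g_0$ on the $2$-torus $A/B_1$ is then finite and non-empty; after passing to a finite index subgroup $\Gamma_3\subset \Gamma_2$ fixing this set pointwise, the fibre $C$ of $A\to A/B_1$ over a chosen fixed point is an elliptic curve preserved by $\Gamma_3$, and triviality of $\chi|_{\Gamma_3}$ forces $\Gamma_3$ to act on $C$ by translations, contradicting Lemma~\ref{lem:inv-curve}. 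If instead $\dim V_1=2$, I run the analogous argument with the character $\chi'$ of $L(\Gamma_1)$ on the $1$-dimensional quotient $V'_1$: its kernel $\Gamma_2$ is non-abelian free and acts on the elliptic curve $A/B_1$ by translations, and the kernel $K_2$ of that induced action preserves the $2$-torus $B_1$ and acts freely on it. Lemma~\ref{lem:tori-dim2} then forces $K_2$ to be cyclic, and as a normal cyclic subgroup of the non-abelian free group $\Gamma_2$ it must be trivial; therefore $\Gamma_2$ embeds into the abelian translation group of $A/B_1$, a contradiction.

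The main obstacle is the case $\dim V_1=1$: one must combine the commutation of $\Gamma_1$ with $g_0$ (to preserve the finite $g_0$-fixed set on $A/B_1$) with the triviality of $\chi$ (to reduce to translations on the chosen fibre), both of which rely on the $\Q$-rationality of $V_1$ that ultimately comes from the integrality $L(g_0)\in \GL_3(\Z)$. Once the descent to an elliptic curve or a $2$-torus is set up cleanly, the freeness of the induced action immediately collides with the low-dimensional Lemmas~\ref{lem:inv-curve} and~\ref{lem:tori-dim2}.
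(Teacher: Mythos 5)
Your proof is correct in substance and, after the common setup (lift the action to a torus cover, centralize the deck group $G$ up to finite index, decompose $\C^3$ using a non-trivial finite-order linear part $L(g_0)$), it departs from the paper's argument in how the contradiction is extracted. The paper works on the universal cover $\C^3$ with the affine group $\tilde G$, separately rules out the case where $S=L(g_0)$ has three distinct eigenvalues (three invariant lines would make $\Gamma$ virtually solvable), and in the remaining cases exhibits an explicit fixed point of the commutator $[\hat f,\hat g]$ of a generating pair, whose translation part lands in the eigenspace where the linear part has no eigenvalue $1$. You instead integrate the rational eigenspace $V_1=\ker(L(g_0)-\Id)$ to a subtorus $B_1$, descend to $A/B_1$, and reduce to a free action of a non-abelian free group on an elliptic curve (when $\dim V_1=1$, via the non-empty finite fixed locus of $g_0$ on $A/B_1$, which your centralizing subgroup permutes) or on a $2$-torus (when $\dim V_1=2$, via the kernel of the induced translation action on the elliptic curve $A/B_1$), colliding with Lemmas~\ref{lem:inv-curve} and~\ref{lem:tori-dim2}. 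Your route recycles the low-dimensional lemmas already available and needs no separate treatment of the three-eigenvalue case, since only $V_1$ and one invariant complement are used; the paper's route is shorter at this point and stays entirely inside affine geometry on $\C^3$, at the cost of the commutator computation.

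One step to tighten: the Galois closure of the given cover is a normal cover of $M$, but a normal core need not be a characteristic subgroup of $\pi_1(M)$, so automorphisms of $M$ need not lift to it. Replace it by a characteristic cover, e.g. the one corresponding to the intersection of all subgroups of $\pi_1(M)$ of the relevant index (there are finitely many, as $\pi_1(M)$ is finitely generated); this is still a finite cover of the original torus, hence a torus, and every automorphism of $M$ lifts to it. With that adjustment, the splitting of the extension over the free group $\Gamma$, the passage to a finite-index subgroup centralizing $G$, and the freeness of the resulting $G\times\Gamma_1$-action on $A$ all go through as you describe.
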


Thus, the first case is excluded, since we assume that $M$ is not a torus. 

\begin{proof}
By assumption, there is a torus $A=\C^3/\Lambda$, and a finite group $G$ acting freely on $A$ such that $M=A/G$. 
By construction, $M$ is a quotient of $\C^3$ by a group of affine transformations $\tilde{G}\subset \Aff(\C^3)$; the group
$\Lambda$ is a finite index subgroup of $\tilde{G}$, and the image of the (linear part) homomorphism $L\colon \tilde{G}\to \GL_3(\C)$ is a finite
group (isomorphic to $G$ since the action of $G$ on $A$ is free).

The group $\Gamma$ lifts to a free group of affine transformations of $\C^3$  permuting the orbits of $\tilde{G}$.
When $f$ is an element of $\Gamma$, we denote by $\hat{f}\colon z\mapsto L(f)z+T(f)$ the corresponding affine transformation. 
The group $L(\Gamma)$ normalizes $L(\tilde{G})$, and a finite index subgroup  $L(\Gamma_0)$ commutes to 
every element in $L(\tilde{G})$. If $G$ is non-trivial, then $L(\tilde{G})$ contains a non-trivial linear transformation $S$,  $S$ is diagonalizable
(because it has finite order), and $L(\Gamma_0)$ preserves its eigenspaces. Since the action of $G$ 
on $A$ is fixed-point free, the eigenspace $E_1$ corresponding to the eigenvalue $1$ has positive dimension, intersects $\Lambda$ 
on a lattice $\Lambda_1=E_1\cap \Lambda$, and both $E_1$ and $\Lambda_1$ are invariant under the action of $L(\Gamma_0)$.
If $S$ had three distinct eigenvalues, the three eigenlines of $S$ would be $L(\Gamma_0)$ invariant, contradicting the fact 
that $\Gamma$ is not virtually solvable. Thus, $S$ has exactly one other eigenvalue $\alpha$, corresponding to an $L(\Gamma_0)$-invariant
eigenspace $E_\alpha$, with $E_1\oplus E_\alpha=\C^3$. 

Assume $\dim(E_1)=1$ and $\dim(E_\alpha)=2$. Since $\GL(E_1)$ is abelian, there is a non-abelian free subgroup $\Gamma_1$ of $\Gamma_0$ such that
$L(\Gamma_2)$ acts trivially on $E_1$. Then, there is a non-abelian free subgroup $\Gamma_2$  of $\Gamma_1$ such that the action of $L(\Gamma_2)$ on $E_\alpha$ is made of matrices with eigenvalues $\neq 1$.
Take a generating pair $f$, $g\in\Gamma_2$; computing the commutator $[\hat{f},\hat{g}]$, we observe that the translation part $T([f,g])$ is contained in 
$E_\alpha$ ; this implies that $[\hat{f},\hat{g}]$ has a fixed point in $\C^3$, contradicting the assumption on $\Gamma$. 

The case $\dim(E_1)=2$
is similar: we just have to permute the role of $E_1$ and $E_\alpha$. Thus, $L(\tilde{G})$ is trivial and $M$ is actually a torus. 
\end{proof}

Assume we are in case (3), with a finite cover $M'$ of $M$ isomorphic to $X\times E$ for some $K3$ surface $X$; 
there is a finite group of automorphisms $F\subset \Aut(M')=\Aut(X\times E)$ acting freely on $M'$ such that $M=M'/F$.
Every automorphism $f$ of $M'$ preserves the Albanese fibration $\alpha'\colon M'\to E$: this gives a homomorphism $F\to \Aut(E)$, 
and we denote by $F_E$ its image. The fibration $\alpha'$ determines a fibration $\alpha\colon M\to E/F_E$, and this fibration is
$\Gamma$-invariant. Two cases may occur. Either $E/F_E$ is a curve of genus $1$, its automorphism group is solvable, and 
we get a contradiction with Lemmas~\ref{lem:surfaces-free} and~\ref{lem:rest-free}. Or $E/F_E$ is a Riemann sphere, the fixed points of the elements of 
$F_E$ correspond to the critical values of $\alpha$, and a finite index subgroup of $\Gamma$ preserves the corresponding fibers. 
Again, Lemma~\ref{lem:surfaces-free} provides a contradiction.

\medskip

We can now assume that we are in case (2), i.e. the universal cover of $M$ is an irreducible  Calabi-Yau threefold. 
Lifting $\Gamma$ to the universal cover, we can assume that $M$ itself is Calabi-Yau. 
The action on cohomology gives rise to a homomorphism $\Gamma\to \GL(H^*(M;\Z))$,  
$f\mapsto f_*=(f^{-1})^*$.  Here is the 
key lemma:

\begin{lem}\label{lem:key-lemma}
Let $M$ be a (simply connected) Calabi-Yau threefold. The action of 
$\Aut(M)$ on the cohomology group $H^3(M;\Z)$ factors through a finite group.

Let $f$ be an automorphism of $M$. If all orbits of  $f$ are infinite then 
\begin{enumerate} 
\item the action of 
$f$ on the cohomology of $M$ is virtually unipotent: there is a positive integer $k$ such that $(f^k)^*$
is unipotent;
\item $h^{2,1}(M)=h^{1,1}(M)$ and the topological Euler characteristic of $M$ is $0$.
\end{enumerate}
\end{lem}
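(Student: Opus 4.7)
The plan is to handle the three assertions in order: the finiteness in the first statement by Hodge theory, and the two conclusions about a single automorphism $f$ with only infinite orbits by applying the topological Lefschetz fixed point formula to all iterates $f^n$.

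For the finiteness claim, I will show that the image of the representation $\Aut(M)\to\GL(H^3(M;\Z))$ is a discrete subgroup of a compact Lie group. Any $f\in\Aut(M)$ is holomorphic, so $f^*$ preserves each Hodge component $H^{p,q}$ of $H^3(M;\C)$. Since $M$ is simply connected, $H^1(M;\R)=0$, so the Lefschetz decomposition of $H^3$ is trivial and every class is primitive. The Hodge-Riemann bilinear relations then yield a definite Hermitian form on each $H^{p,q}$ with $p+q=3$, and $f^*$ preserves it because $f$ preserves the cup product and the integral over $M$. Hence the image of $\Aut(M)$ in $\GL(H^3(M;\C))$ lies in a finite product of unitary groups, and since it also preserves the integral lattice $H^3(M;\Z)$ it is discrete in this compact group, hence finite.

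Now let $f\in\Aut(M)$ have all orbits infinite, so that no iterate $f^n$ has a fixed point. The topological Lefschetz fixed point theorem gives $L(f^n)=0$ for every $n\geq 1$. Using $H^1=H^5=0$, this reads $2+\Tr((f^n)^*\vert_{H^2})+\Tr((f^n)^*\vert_{H^4})-\Tr((f^n)^*\vert_{H^3})=0$. By the first assertion a power $(f^k)^*$ acts as the identity on $H^3$; after replacing $f$ by $f^k$ one has $\Tr((f^n)^*\vert_{H^3})=h^3(M)=2+2h^{2,1}$ for every $n\geq 1$. Writing $\mu_1,\dots,\mu_b$ for the complex eigenvalues of $f^*\vert_{H^2}$, Poincaré duality makes $f^*\vert_{H^4}$ have eigenvalues $\mu_i^{-1}$, and the Lefschetz identity collapses to $\sum_i(\mu_i^n+\mu_i^{-n})=2h^{2,1}$ for all $n\geq 1$. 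The uniform bound in $n$ forces $|\mu_i|=1$ for every $i$. The $\mu_i$ are algebraic integers (the characteristic polynomial of $f^*\vert_{H^2}$ is integral), and their Galois conjugates are also eigenvalues of $f^*\vert_{H^2}$, hence also of absolute value~$1$; Kronecker's theorem then implies each $\mu_i$ is a root of unity. A further power of $f^*$ is therefore unipotent on $H^2$ and, by duality, on $H^4$; together with the triviality on $H^0,H^3,H^6$ and the vanishing on $H^1,H^5$, some $(f^k)^*$ is unipotent on all of $H^*(M;\Z)$, proving~(1).

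For~(2), replace $f$ by that power so $f^*$ is unipotent on $H^*(M;\Z)$; then $\Tr(f^*\vert_{H^i})=h^i(M)$ for every $i$. Substituting into $L(f)=0$ gives $0=2+2h^{1,1}-(2+2h^{2,1})$, whence $h^{2,1}=h^{1,1}$, and $\chi(M)=2+2h^{1,1}-(2+2h^{2,1})=0$. I expect the subtlest step to be the Hodge-theoretic argument of the first assertion: one must identify the Hodge-Riemann form as a definite Hermitian form on each $H^{p,q}$, which requires the primitivity of all of $H^3$ (hence $H^1=0$) in order that the image of $\Aut(M)$ sits inside a compact group. Once this is in place, the Lefschetz-Kronecker mechanism mechanically produces both the virtual unipotence and the numerical equalities.
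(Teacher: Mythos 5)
Your proof is correct, and your treatment of the finiteness statement is exactly the paper's: primitivity of all of $H^3$ (from $b_1=b_5=0$), the Hodge--Riemann relations producing an $\Aut(M)$-invariant definite Hermitian form on each $H^{p,q}$ with $p+q=3$, and discreteness of the image in the resulting compact group. For the dynamical assertions the paper takes a slightly different route: it applies the holomorphic Lefschetz fixed point formula (for the sheaf $\Omega^1_M$) to the iterates $f^n$, which yields $\Tr((f^n)^*_{1,1})=\Tr((f^n)^*_{2,1})$ directly; after passing to a power acting trivially on $H^3$, the right-hand side becomes the constant $h^{2,1}(M)$, and constancy of the power sums $\Tr((g^n)^*_{1,1})$ forces every eigenvalue of $g^*_{1,1}$ to equal $1$, giving unipotence and $h^{1,1}=h^{2,1}$ in one stroke. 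You instead use the topological Lefschetz formula, compensating with Poincar\'e duality (to relate the spectra on $H^2$ and $H^4$) and Kronecker's theorem (to upgrade $\vert\mu_i\vert=1$ to roots of unity, which the symmetric constraint $\sum_i(\mu_i^n+\mu_i^{-n})=2h^{2,1}$ alone does not give, since it does not pin each $\mu_i$ to be $1$). Both mechanisms are sound: the paper's is marginally shorter because the holomorphic formula isolates $H^{1,1}$, while yours avoids the Atiyah--Bott formula altogether and relies only on the classical topological theorem plus the first assertion. The one step you should not leave implicit is that boundedness of the power sums $\sum_j\nu_j^n$ of a finite multiset of nonzero complex numbers forces $\vert\nu_j\vert\le 1$ for all $j$ (the poles of $\sum_j(1-\nu_j t)^{-1}$ at a common point have residues that cannot cancel, so the radius of convergence is $\min_j\vert\nu_j\vert^{-1}\ge 1$); applied to the multiset $\{\mu_i\}\cup\{\mu_i^{-1}\}$ this gives $\vert\mu_i\vert=1$, and it is doing real work in your argument.
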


\begin{proof} 
Fix a K\"ahler class $\kappa$ on $M$.
Since $h^{p,q}(M)=0$ when $p+q=1$ or $5$, we deduce that every class $\alpha$ in $H^{2,1}(M)$
is primitive: $\alpha \wedge \kappa=0$ because $H^{3,2}(M)=0$. Thus, the intersection product $\int_M\alpha \wedge\overline{\alpha}$
determines an $\Aut(X)$-invariant, positive definite quadratic form on the vector space $H^{2,1}(M)$. 
On $H^{3,0}(M)$, the product $\omega\mapsto \int_M\omega \wedge {\overline{\omega}}$ is also 
positive definite. As a consequence, the image of $\Aut(M)$ in $H^3(M;\C)$ is contained in a unitary group. Since
it preserves the integral structure $H^3(M;\Z)$, it is contained in a finite group. This implies that
a finite index subgroup of $\Aut(M)$ acts trivially on $H^3(M,\Z)$. 

Now, apply the holomorphic Lefschetz fixed point theorem: if there is no periodic point, the traces of $f^*$  
on $H^{1,1}(M)$ and $H^{2,1}(M)$ satisfy 
\begin{equation}
\Tr((f^n)^*_{1,1})=\Tr((f^n)^*_{2,1}) \quad \; (\forall n \in \Z\setminus\{0\}).
\end{equation}  
Changing $f$ in some positive iterate $g=f^m$, we get $\Tr((g^n)^*_{2,1})=h^{2,1}(M)$ 
for all $n$, and then we deduce $\Tr((g^n)^*_{1,1})=h^{2,1}(M)$  for all $n$. This equality implies that 
$(g^*)_{1,1}$ is unipotent  and $h^{2,1}(M)=h^{1,1}(M)$. Thus, $(f^*)_{1,1}$ is virtually unipotent. 
Since $h^{2,0}(M)=0$, the action of $f^*$ on $H^*(M,\Z)$ is virtually unipotent. 
\end{proof}

On a Calabi-Yau threefold, every holomorphic vector field on $M$ is identically $0$; this implies that
$\Aut(M)^0$ is trivial and that the representation $\Aut(M)\to \GL(H^*(M;\Z))$ has a finite kernel (see~\cite{Lieberman}).
So,  $\Gamma$ being a free subgroup of $\Aut(M)$, the representation $\Gamma\to \GL(H^*(M;\Z))$ is faithfull. 
Since $\Gamma$ acts freely on $M$, Lemma~\ref{lem:key-lemma} implies that all elements $f^*\in \GL(H^*(M;\Z))$, for $f\in\Gamma$, are
virtually unipotent: we conclude that $\Gamma$ is cyclic, because a subgroup of $\GL_m(\C)$
all of whose elements are virtually unipotent is a solvable group up to finite index. We obtain the following.

\begin{lem}
Let $M$ be a compact K\"ahler manifold of dimension $3$ whose  Kodaira dimension is non-negative. 
If there is a non-abelian free subgroup of $\Aut(M)$ acting freely on $M$, then  $M$ is a torus. 
\end{lem}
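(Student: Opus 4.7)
The plan is to apply Lemma~\ref{lem:kod-inv-Z} to reduce to the case where $K_M$ is torsion, then invoke the Beauville--Bogomolov decomposition to enumerate the possibilities in dimension three, and finally exclude everything except the torus. First, suppose Lemma~\ref{lem:kod-inv-Z} produces a $\Gamma_1$-invariant proper irreducible Zariski closed subset $Z\subset M$ for some finite index subgroup $\Gamma_1\subset \Gamma$ (still a non-abelian free group). By Lemma~\ref{lem:rest-free}, $\Gamma_1$ acts freely on $Z$. If $\dim(Z)=1$, applying Lemma~\ref{lem:inv-curve} to the normalization of $Z$ forces $\Gamma_1$ cyclic, a contradiction. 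If $\dim(Z)=2$, I would stratify $Z$ by its iterated singular loci in the spirit of Remark~\ref{rem:singular_case} to isolate a smooth $\Gamma_1$-invariant K\"ahler surface and then invoke Lemma~\ref{lem:surfaces-free}. Either way $K_M$ must be torsion.

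Next, after replacing $\Gamma$ by a suitable finite index subgroup $\Gamma_0$ and $M$ by a finite \'etale cover $M'$ (using Lemma~\ref{lem:rest-free} to keep the action free), I may assume $K_{M'}$ is trivial. The Beauville--Bogomolov decomposition, applied in dimension three, implies that after a further finite \'etale cover $M''$ falls into one of three classes: (1) a complex $3$-torus; (2) a simply connected Calabi--Yau threefold; (3) a product $X\times E$ of a K3 surface with an elliptic curve. Case (1), combined with Lemma~\ref{lem:finite-cover-torus}, directly gives the desired conclusion that $M$ itself is a torus.

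For case (3), the Albanese morphism $\alpha'\colon M''\to E$ is canonical and $\Aut(M'')$-equivariant up to the induced action on $E$, so descending produces a $\Gamma_0$-invariant fibration $\alpha\colon M\to E/F_E$ for some finite group $F_E\subset \Aut(E)$. Either $E/F_E$ is of genus one, its automorphism group is virtually solvable, and the kernel of $\Gamma_0\to\Aut(E/F_E)$ is a non-abelian free group acting freely on a K3 fiber, contradicting Lemma~\ref{lem:surfaces-free}; or $E/F_E\cong \bbP^1(\C)$, and the finitely many critical fibers are permuted by $\Gamma_0$, yielding after finite index a $\Gamma_0$-invariant K3 surface and again a contradiction via Lemma~\ref{lem:surfaces-free}. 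For case (2), the free action on a compact manifold forces every $f\in \Gamma_0$ to have only infinite orbits, so Lemma~\ref{lem:key-lemma} shows that each $f^*$ acting on $H^*(M'';\Z)$ is virtually unipotent; since on a Calabi--Yau threefold the cohomology representation has finite kernel (one has $\Aut(M'')^0=\{\Id\}$, cf.~\cite{Lieberman}), $\Gamma_0$ embeds into a subgroup of $\GL(H^*(M'';\C))$ whose elements are all virtually unipotent, forcing $\Gamma_0$ to be virtually solvable and contradicting the fact that it is a non-abelian free group.

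The step I expect to be the main obstacle is the Calabi--Yau case (2): the Hodge-theoretic trace argument of Lemma~\ref{lem:key-lemma} already requires some care, and the final passage from pointwise virtual unipotency of every element to virtual solvability of the ambient subgroup of $\GL_m(\C)$ relies on a Kolchin- or Tits-style input that must be cited correctly. A secondary bookkeeping challenge is the handling of singular invariant surfaces in the first step, together with the repeated passage to finite index subgroups and finite \'etale covers while preserving freeness of the action.
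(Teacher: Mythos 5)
Your proposal is correct and follows essentially the same route as the paper: reduction to torsion $K_M$ via Lemmas~\ref{lem:kod-inv-Z}--\ref{lem:surfaces-free}, the Beauville--Bogomolov trichotomy after an \'etale cover, Lemma~\ref{lem:finite-cover-torus} for the torus case, the Albanese/quotient-curve argument for $X\times E$, and Lemma~\ref{lem:key-lemma} plus virtual unipotency (Kolchin/Tits) for the Calabi--Yau case. Your extra care with singular invariant subvarieties in the first step mirrors the stratification of Remark~\ref{rem:singular_case} and is a reasonable way to make the paper's terse reduction explicit.
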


\noindent {\bf{b. ${\text{kod}}(M)=-\infty$. --}} Thus, in what follows, we assume that the Kodaira dimension of $M$ is $-\infty$. In particular, $h^{3,0}(M)=0$.
We prove that  non-abelian free  groups   can not act freely on $M$. 

Assume that $h^{1,0}(M)=0$, and apply the holomorphic fixed point formula. This gives 
$\Tr((f^n)^*_{2,0})=-1
$ for all elements $f\neq \Id$ of $\Gamma$ and all integers $n\neq 0$. Again, we get a contradiction. Thus, 
$h^{1,0}(M)>0$, and  the Albanese map is a non-trivial morphism
\begin{equation}
\alpha\colon M\to A_M,
\end{equation}
where $A_M$ is the Albanese torus of $M$. This map is equivariant with respect to a homomorphism
$\rho\colon \Aut(M)\to \Aut(A_M)$, meaning that $\alpha \circ f = \rho(f)\circ \alpha$ for every $f\in \Aut(M)$. 
Let $E\subset A_M$ be the image of $\alpha$.
Assuming that the rank of the free group $\Gamma$ is at least $2$, we shall prove successively that 
\begin{itemize}
\item $E$ is smooth and the map $\alpha\colon M\to E$ is a submersion;
\item $E$ has dimension $2$,
\end{itemize}
and then we obtain a contradiction.

\medskip

If $E$ contains a non-empty Zariski closed proper subset $Z$ which is invariant under the action of $\rho(\Gamma)$,
then $\alpha^{-1}(Z)$ is a non-empty, Zariski closed, proper and $\Gamma$-invariant subset of $M$; its dimension
is at most $2$, and we get a contradiction since $\Gamma$ can not act freely on such a subset (Lemmas~\ref{lem:rest-free}, and~\ref{lem:surfaces-free}). Thus, $E$ is smooth
and the critical locus of $\alpha$ is empty, i.e. $\alpha$ is a submersion. 

\medskip

If $E$ is a curve, its genus is $\geq 1$
(because $E$ is contained in the torus $A_M$),  its automorphism group is solvable, and there is a non-abelian 
free group $\Gamma_1\subset \Gamma$ acting trivially on $E$, and freely on every fiber of $\alpha$: again, 
we get a contradiction from Lemma~\ref{lem:surfaces-free}.

If $\dim(E)=3$, then $M$ is a finite cover of $E$ (because $\alpha$ is a submersion). This implies that there is a non-trivial 
holomorphic $3$-form on $M$, contradicting $h^{3,0}(M)=0$. 

\medskip

Thus, we assume $\dim(E)=2$.  Let   $K\subset A_M$ be the (connected) subtorus of maximal dimension such that $E+K=E$: it is
uniquely determined by $E$, and the projection $p(E)$ of $E$
in the quotient torus $A_M/K$ is a manifold of general type and of dimension $\dim(E)-\dim(K)$  (more precisely, the canonical bundle of $p(E)$ is ample,
see~\cite{Debarre:Book-Tori}, \S VII). 
If $K=\{0\}$ is  reduced to a point, then, $E$ is a surface of general type, 
hence $\Aut(E)$ is a finite group and we get a contradiction with Lemma~\ref{lem:inv-curve}.  If $\dim(K)=1$ we get $\dim(p(E))=1$ and the 
morphism $p\circ \alpha\colon M\to p(E)$ is invariant under a finite index subgroup of $\Gamma$. Since $p(E)$ is a curve
of general type, the image of $\Gamma$ in $\Aut(p(E))$ is finite and, again, we get a contradiction with Lemma~\ref{lem:surfaces-free}.

Now, assume $\dim(K)=2$, which means that $E=A_M$ is a $2$-dimensional torus. 
Denote by $(x,y)$ the affine coordinates on $E=\C^2/\Lambda$, where $\Lambda$ is a lattice in $\C^2$. 
Let $\Omega$ be a holomorphic $2$-form on $M$. Since $h^{3,0}(M)=0$, we get 
\begin{equation}
\Omega\wedge \alpha^*(dx)=\Omega\wedge \alpha^*(dy)=0.
\end{equation}
This implies that $\Omega=a\alpha^*(dx\wedge dy)$ for some holomorphic function $a\colon M\to \C$; such 
a function must be a constant, and we conclude that $H^{2,0}(M)=\C\alpha^*(dx\wedge dy)$. In particular, 
a non-abelian free subgroup $\Gamma_1$ of $\Gamma$ acts trivially on $H^{2,0}(M)$.

 The holomorphic 
Lefschetz fixed point formula gives $\Tr((f^*_{1,0})^n)=2$ for every $f\in \Gamma_1$ and every $n$; 
this shows that $f^*_{1,0}$ is unipotent. Hence, $\rho(\Gamma_1)$ is a subgroup of $\Aut(E)$, all of whose elements 
are affine automorphisms of $\C^2/\Lambda$ with a unipotent linear part: such a group is solvable. Thus, a non-abelian
free subgroup of $\Gamma$ acts freely on the fibers of $\alpha$ and this contradicts Lemma~\ref{lem:inv-curve}.

This concludes the proof of Theorem~A.

\section{Two questions}

\subsection{}\label{par:question1} Here is a  well known question in algebraic dynamics (it is related to specific forms of the abundance conjecture, on Calabi-Yau manifolds
for instance):

\smallskip

{\noindent}{\bf{Question.--}}  Does there exist an automorphism $f$ 
of a complex projective manifold $X$ such that all orbits of $f$ are Zariski dense, but $X$ is not an 
abelian variety?

\subsection{}  Michael Herman proved that there is a real analytic diffeomorphism $h$ of a (real analytic) compact
manifold $M$ such that (1) the topological entropy of $f\colon M\to M$ is positive, and (2) 
$f$ is a minimal transformation of $M$, meaning that every orbit of $f$ is dense in $M$ for the
euclidian topology (see~\cite{Herman:1981}).  In \cite{Rees1981}, Mary Rees  constructs similar examples on tori of any dimension $\geq 2$, but her examples are 
only $C^0$-smooth (in dimension $2$, positive entropy forces the existence of periodic orbits for ${\mathcal{C}}^2$-diffeomorphisms \cite{Katok:IHES}). 
We don't know whether such an example exists in the context of holomorphic
diffeomorphisms of compact K\"ahler manifolds, even for Calabi-Yau manifolds. The  case $\dim(X)\leq 3$ can be dealt with 
technics of the minimal model program, as in \cite{Lesieutre:ENS}.

\smallskip

{\noindent}{\bf{Question.--}} Does there exist an automorphism $f$ of a compact K\"ahler manifold $X$ 
with positive topological entropy, such that all orbits of $f$ are dense in $X$ for the euclidean topology ? 

\smallskip

If the question raised in Section~\ref{par:question1} has a positive answer, then this last question has a negative answer for any
complex projective variety $X$ (an automorphism of an abelian variety with positive entropy always has orbits which are not Zariski dense).

\section{Compact complex manifolds}\label{par:SL2}

This section concerns compact complex manifolds which are not K\"ahler.

\subsection{Examples on Hopf surfaces}\label{par:example_Hopf} Consider the surface $Y=(\C^2\setminus \{0\})/\langle \Phi\rangle$ where 
$\Phi$ is a homothetic contraction $\Phi(x,y)=(\alpha x, \alpha y)$ with $\alpha \in \C^*$ of modulus $\vert \alpha \vert <1$.
The group $\GL_2(\C)$ acts on $\C^2\setminus\{ 0\}$, commuting to $\Phi$, so it acts on $Y$ holomorphically. An 
element $\gamma\in \GL_2(\C)$ induces a fixed point free automorphism of $Y$ if and only if the set of eigenvalues of
$\gamma$ does not intersect $\alpha^\Z$. If $\Gamma$ is a free subgroup of $\SU_2(\C)$, this last property is
satisfied by every element $\gamma\in \Gamma\setminus \{ \Id\}$, so $\Gamma$ acts freely on $Y$. Then, by the
theorem of Baire, we get: {\sl{there is a dense $G_\delta$ subset ${\mathcal{F}}$ of $\GL_2(\C)^2$ such that every pair of matrices
in ${\mathcal{F}}$ generates a free subgroup of $\Aut(Y)$ acting freely on $Y$}}.

If we quotient $Y$ by a finite order homothety $\Psi(x,y)=(\xi x, \xi y)$, with $\xi$ some root of unity, we get a secondary
Hopf surface with the same property.

\begin{rem}\label{rem:Hopf_reel} Let $T$ be the quotient of $\R^2\setminus\{0\}$ by the homothety $\Phi(x,y)=(2 x, 2 y)$; this 
is a real analytic surface diffeomorphic to $\R^2/\Z^2$. The group $\SL(2,\Z)$ contains a free subgroup $\Gamma$ such that 
every element $\gamma\in \Gamma\setminus \{\Id\}$ is diagonalisable, with two real eigenvalues  $\lambda(\gamma)>\lambda(\gamma)^{-1}$. None of these eigenvalues is in $2^\Z$, because $\lambda(\gamma)$ is not an integer (it is a quadratic integer).
So, $\Gamma$ acts freely on  $T$ by real analytic diffeomorphisms. 
\end{rem}

\subsection{Surfaces}  Let $S$ be a compact complex surface. Assume that $S$ is not K\"ahler, and that $S$ admits an
automorphism $f\colon S\to S$ with no periodic orbit. According to Kodaira's classification, such a surface has a unique minimal
model $\eta\colon S\to S_0$. If $\eta$ is not an isomorphism, it contracts a finite number of rational curves $E_i\subset S$, 
some positive iterate $f^m$ preserves each $E_i$ and has a fixed point on it, thus our assumption 
implies that $S$ is minimal. 

Let $a(S)$ be the algebraic dimension of $S$, i.e. the  transcendance degree  of the field of meromorphic functions ${\mathcal{M}}(S)$. 
A smooth  compact complex  surface with $a(X)=2$ is projective, so $a(S)\in \{0, 1\}$. 

If $a(S)=1$ there is a genus $1$ fibration $\pi\colon S\to B$, onto some curve $B$, such that 
${\mathcal{M}}(S)=\pi^* {\mathcal{M}}(B)$ (see~\cite{BPVdV}, Proposition VI.4.1 and its proof).
This fibration is $\Aut(S)$-equivariant: there is a homomorphism $\rho\colon \Aut(S)\to \Aut(B)$ such that $\pi\circ g=\rho(g)\circ \pi$ for all $g\in \Aut(S)$. 

If $a(S)=0$, then $S$ is a surface of class ${\mathrm{VII}}_0$, i.e. a minimal surface with Kodaira dimension ${\text{kod}}(S)=-\infty$ and first Betti number $b_1(S)=1$. Since $f$ has no periodic orbit, the topological Euler characteristic of $S$ vanishes (see~\cite{Fuller}); equivalently, $b_2(S)=0$.
According to \cite{Teleman}, $S$ must be a Hopf or a Inoue surface (see also~\cite{Bogomolov, Li-Yau-Zheng}).  


\medskip

\begin{thm-B}
Let $S$ be a smooth connected compact complex surface. There is a  non-abelian free group $\Gamma\subset \Aut(S)$  acting 
freely on $S$ if, and only if $S$ is a Hopf surface obtained as a quotient of $\C^2\setminus\{ 0\}$ by a group of homotheties 
$$ 
(x,y)\mapsto (\alpha^m \xi^n x, \alpha^m \xi^n y)
$$
with $\alpha\in \C^*$ of modulus $<1$ and $\xi$ a root of unity. 
\end{thm-B}

\begin{proof} Section~\ref{par:example_Hopf} gives one implication. We now assume that $\Gamma$ is a 
non-abelian free group acting freely on $S$. By Lemma~\ref{lem:surfaces-free}, $S$ is not  K\"ahler.
As seen above, $S$ is minimal, and there are three cases to consider: $\Aut(S)$ preserves a genus $1$ fibration $\pi\colon S\to B$; 
$S$ is a Hopf surface; or $S$ is an Inoue surface. 

Assume that $\Aut(S)$ preserves a genus $1$ fibration. If $\rho(\Gamma)\subset \Aut(B)$ is virtually solvable, 
we argue as in Lemma~\ref{lem:surfaces-free} and obtain a contradiction. So,  $B$ is isomorphic to $\bbP^1(\C)$ and all orbits of
$\rho(\Gamma)\subset \Aut(\bbP^1(\C))$ are infinite; as a consequence, $\rho$ is a submersion and is isotrivial. From~\cite{BPVdV}, 
Section V.5 and Theorem V.5.4, we know that $S$ is a Hopf surface ($S$ can not be a product since it is not K\"ahler).
So, we only need to prove the following lemma. \end{proof}

\begin{lem}
Let $S$ be a Hopf or Inoue surface. Either $\Aut(S)$ contains a finite index solvable subgroup or $S$ is one of the surfaces 
described in Theorem~B.
\end{lem}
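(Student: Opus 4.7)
The plan is to handle the Hopf and Inoue cases separately, and in each case to identify $\Aut(S)$ with the normalizer of $\pi_1(S)$ in the automorphism group of the universal cover, modulo $\pi_1(S)$. For Inoue surfaces the cover is $\Hyp\times\C$ with $\Aut(\Hyp\times\C)=\PSL_2(\R)\times\Aff(\C)$ (no factor swap, since $\Hyp$ and $\C$ are not biholomorphic). For Hopf surfaces, Hartogs extension gives $\Aut(\C^2\setminus\{0\})=\Aut(\C^2)_0$, the group of biholomorphisms of $\C^2$ fixing the origin.

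For Inoue surfaces of each of the three types ($S_M$, $S^+_{N,p,q,r;t}$, $S^-_{N,p,q,r}$), $\pi_1(S)$ is a polycyclic discrete subgroup contained in the solvable parabolic subgroup of $\PSL_2(\R)\times\Aff(\C)$ that fixes the boundary line $\R\subset\partial\Hyp$ and preserves the real affine structure on $\C$; a direct computation (carried out in Inoue's original paper and refined by Fujiki) shows the normalizer of $\pi_1(S)$ in $\Aut(\Hyp\times\C)$ lies in the same solvable parabolic, so $\Aut(S)$ is virtually solvable (in fact virtually abelian). I would invoke this rather than reproduce the explicit tables.

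For a Hopf surface, Kato's structure theorem exhibits $\pi_1(S)$ as a finite extension of an infinite cyclic normal subgroup $\langle\gamma\rangle$, with $\gamma$ a holomorphic contraction at $0\in\C^2$, and $\langle\gamma\rangle$ is characteristic in $\pi_1(S)$. Hence every lift of an element of $\Aut(S)$ to $\Aut(\C^2)_0$ normalizes $\langle\gamma\rangle$; since a holomorphic self-map cannot conjugate the contraction $\gamma$ to its expanding inverse $\gamma^{-1}$, every such lift in fact commutes with $\gamma$. Thus $\Aut(S)$ is, up to finite index, a quotient of the centralizer $Z(\gamma)\subset\Aut(\C^2)_0$, and the task reduces to showing that $Z(\gamma)$ is virtually solvable unless $\gamma$ is a scalar homothety.

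The core computation is Poincar\'e-Dulac in nature. After bringing $\gamma$ into Kodaira-Wall normal form (the diagonal $(x,y)\mapsto(\alpha x,\beta y)$ with $0<|\alpha|\le|\beta|<1$, or the resonant $(x,y)\mapsto(\beta^n x+\lambda y^n,\beta y)$), expand a commuting germ $g=\sum_d A_d$ at $0$ and impose $A_d\circ\gamma=\gamma\circ A_d$ componentwise: a monomial $x^iy^je_k$ in $A_d$ can appear only when $\alpha^i\beta^j$ equals the $k$-th eigenvalue of $\gamma$. If $\gamma=\alpha I$ is scalar, the relation forces $d=1$ and $Z(\gamma)=\GL_2(\C)$; otherwise only linear and at most finitely many resonant monomials survive, and $Z(\gamma)$ is solvable (abelian in the non-resonant case, metabelian in the resonant or Jordan case). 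In the scalar case one further analyzes the finite quotient $F=\pi_1(S)/\langle\alpha I\rangle\subset \GL_2(\C)/\langle\alpha I\rangle$: if $F$ contains a non-scalar element, its normalizer in $\GL_2(\C)$ is contained in the normalizer of a non-central cyclic subgroup, hence in the normalizer of a maximal torus, which is solvable; otherwise $F$ consists of scalars, forcing $\pi_1(S)=\{\alpha^m\xi^n I\}$ for a root of unity $\xi$, exactly the Theorem~B family. The main technical obstacle is the Poincar\'e-Dulac bookkeeping needed to control the full nonlinear centralizer $Z(\gamma)$ beyond $\GL_2(\C)$; once past this step, the descent to $\pi_1(S)$ reduces to standard finite-subgroup considerations inside $\PGL_2(\C)$.
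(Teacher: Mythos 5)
Your proposal follows essentially the same route as the paper: lift to the universal cover, use Hartogs to place the lifted automorphism group of a Hopf surface inside the automorphisms of $\C^2$ fixing the origin, note that a lift normalizing $\langle\gamma\rangle$ must centralize $\gamma$ (a contraction cannot be conjugated to its expanding inverse), reduce to the centralizer of the contraction in normal form, and finish the scalar case by the same finite-subgroup analysis in $\PGL_2(\C)$; the only stylistic difference is that you carry out the Poincar\'e--Dulac centralizer computation where the paper cites Namba and Wehler, and you invoke Inoue--Fujiki where the paper argues directly.

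One concrete error should be fixed in the Inoue case: $\Aut(\Hyp\times\C)$ is \emph{not} $\PSL_2(\R)\times\Aff(\C)$. Since every holomorphic map $\C\to\Hyp$ is constant, an automorphism of $\Hyp\times\C$ has the form $F(z,w)=(u_F(z),a_F(z)w+b_F(z))$ with $u_F\in\PSL_2(\R)$ and $a_F,b_F$ \emph{arbitrary} holomorphic functions of $z\in\Hyp$ (with $a_F$ non-vanishing); for instance $(z,w)\mapsto(z,w+f(z))$ is an automorphism for every holomorphic $f$ on $\Hyp$. So the normalizer of $\pi_1(S)$ need not sit inside the finite-dimensional ``parabolic'' subgroup of $\PSL_2(\R)\times\Aff(\C)$ that you describe. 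The conclusion you want is nevertheless correct, and the paper proves it directly: normalizing the deck group forces the first coordinate to be $z\mapsto uz+v$ with $u\in\R^*$, $v\in\R$, and the resulting group of maps $(z,w)\mapsto(uz+v,a(z)w+b(z))$ is solvable (an extension of the real affine group by the metabelian group of fiberwise affine maps). A second, minor imprecision: for a secondary Hopf surface $\langle\gamma\rangle$ need not be literally characteristic in $\pi_1(S)$; one should pass to $\langle\gamma^k\rangle$ for a sufficiently divisible $k$, as the paper does, before running the conjugation argument.
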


\begin{proof} 
First, assume that $S$ is a Hopf surface. 
A finite cover $S'$ of $S$ is a primary Hopf surface, i.e. a quotient of $\C^2\setminus\{ 0\}$ by a holomorphic contraction 
$\Phi\colon \C^2\to \C^2$ fixing the origin. Namba gave normal forms for $\Phi$ and described the automorphism group of 
primary Hopf surfaces according to this normal form; as a corollary, $\Aut(S')$ and $\Aut(S)$ are solvable groups, 
except if $\Phi$ is (conjugate to) a homothety, in which case $\Aut(S')$ is $\GL_2(\C)/\langle \Phi\rangle$ 
(see \cite{Namba}, as well as \cite[Section I]{Wehler}). This last property is based on the 
following remark. 

\begin{rem} Suppose $\Phi(x,y)=(\alpha x, \alpha y)$ 
with $\alpha$ of infinite order, and consider  an automorphism $g$ of $\C^2\setminus\{0\}$
such that $g\Phi g^{-1}=\Phi^{\pm 1}$. By Hartogs 
theorem, $g$ extends to an automorphism of $\C^2$ fixing the origin. Then the equation $g(\alpha x, \alpha y)=\alpha^{\pm 1} g(x,y)$ implies that
$g\in \GL_2(\C)$ and  $g\Phi g^{-1}=\Phi$. \end{rem}

If $S$ is a secondary Hopf surface, $S=S'/G$ for some finite subgroup $G$ of $\Aut(S')$ acting freely on $S'$. 
The fundamental group of $S$ is 
a central extension $0\to \Z\to \Lambda \to G\to 1$, where $\Z$ corresponds to $\pi_1(S')$; we can also identify $\Lambda$
to the group of deck transformations on the universal cover $\C^2\setminus\{ 0\}$ of $S$, 
with $\Z$ generated by the homothety $\Phi$, and from the last remark we see that $\Lambda$ is a subgroup of $\GL_2(\C)$.
Note that the action of $\Lambda\subset \GL_2(\C)$
on $\bbP(\C^2)=\bbP^1(\C)$ factorizes as an action of $G$; we denote by ${\sf{P}}(G)$ its image. 

Lifting the group $\Aut(S)$ to the universal cover, we obtain 
an extension 
\begin{equation}
1\to \Lambda \to {\tilde \Aut(S)} \to \Aut(S)\to 1.
\end{equation}
The group $\Aut(S)$ normalizes the group $\Lambda$, hence also the subgroup generated by $\Phi^k$ for some sufficiently divisible 
integer $k\geq 1$. The last remark, applied with $\Phi^k$, shows that ${\tilde \Aut(S)}$ is contained in $\GL_2(\C)$.

The subgroup ${\sf{P}}(G)$ is normalized by the image of ${\tilde \Aut(S)}$ in $\PGL_2(\C)$, and a finite index subgroup  of ${\sf{P}}({\tilde \Aut(S)})$
commutes to all elements of ${\sf{P}}(G)$. If ${\sf{P}}(G)$ were not trivial, ${\tilde \Aut(S)}$ would be virtually solvable, because 
the centralizer of a finite order element of $\PGL_2(\C)$ if cyclic or dihedral. So, ${\sf{P}}(G)$ is trivial and $\Lambda$ is 
contained in the group of homotheties $\C^*$.  Since $\Lambda$ is discrete, 
it is generated by $\Phi$ and a homothety $\Psi(x,y)=(\xi x, \xi y)$ of finite order. 
We conclude that $S$ is one of the Hopf surfaces of Theorem~B.


There are three types of Inoue surfaces, but in all cases the universal cover $\tilde{S}$ is isomorphic to 
$\Hyp\times \C$, where $\Hyp\subset \C$ is the upper half plane. By Liouville's theorem, every holomorphic function 
from $\C$ to $\Hyp$ is constant; this implies that every holomorphic diffeomorphism 
$F$ of $\Hyp\times \C$ preserves the fibration onto $\Hyp$:  $F(z,w)=(u_F(z),a_F(z) w+b_F(z))$ 
where $u_F$ is in $\PSL_2(\R)$, acting by homographies on $\Hyp$, $a_F$ and $b_F$ are holomorphic
functions of $z\in \Hyp$, and $a_F$ does not vanish. The group of deck transformations $\Lambda\simeq \pi_1(S)$
of the universal cover $\Hyp\times \C\to S$ fixes a unique point on the boundary of $\Hyp$; putting
this point at infinity, the action of $\Lambda$ on $\Hyp$ is generated by real translations $z\mapsto z+a_i$ and 
a scalar multiplication $z\mapsto \alpha z$ (see \S~2 and~3 of~\cite{Inoue}). Let ${\tilde \Aut(S)}$ be the group of all possible lifts of 
automorphisms of $S$ to its universal cover (an extension of $\Aut(S)$ by $\Lambda$). This group
normalizes the action of $\Lambda$, so every element ${\tilde f}$ of ${\tilde \Aut(S)}$ can be written 
${\tilde f}(z,w)=(u z + b, a(z)w+b(z))$, with $u\in \R^*$ and $v\in \R$. Hence, ${\tilde \Aut(S)}$ 
is a solvable group. 
\end{proof}

\subsection{An example in dimension $3$} Let $\Lambda$ be a lattice in the Lie group $\SL_2(\C)$. The 
quotient space $M=\SL_2(\C)/\Lambda$ is a compact complex manifold of dimension $3$; it is not K\"ahler (for instance
it contains homologous curves of arbitrary large area). 
Let $\gamma$ be an element of $\SL_2(\C)$. Its action by left multiplication $g\mapsto \gamma\cdot g$ on $\SL_2(\C)$ 
induces an automorphism, i.e. a holomorphic diffeomorphism, $L_\gamma\colon M\to M$. The point $g\Lambda\in M$ 
is fixed by $L_\gamma$ if and only if $g^{-1} \gamma g\in \Lambda$. 

\begin{pro}
There is a non-abelian free group $\Gamma\subset \SL_2(\C)$ acting freely on $M$ by left translation. There is a dense
$G_\delta$  subset ${\mathcal{F}}$ in $\SL_2(\C)\times \SL_2(\C)$ such that 
any pair $(\gamma_1,\gamma_2)\in  {\mathcal{F}}$ generates a free subgroup of rank $2$ in $\SL_2(\C)$ 
acting freely on $M$.
\end{pro}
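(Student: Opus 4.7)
The plan is to prove the second (dense $G_\delta$) assertion directly by a Baire category argument; the first assertion then follows, since a dense $G_\delta$ in the non-empty Baire space $\SL_2(\C)^2$ is automatically non-empty.

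First, I would translate the free-action condition into a condition on traces. The translation $L_\gamma$ fixes a point $g\Lambda\in M$ iff $g^{-1}\gamma g\in\Lambda$, so $L_\gamma$ acts without fixed points on $M$ iff $\gamma$ is not $\SL_2(\C)$-conjugate to any element of $\Lambda$. Since conjugation in $\SL_2(\C)$ preserves the trace, it suffices to ensure that $\Tr(\gamma)\notin T_\Lambda:=\{\Tr(\lambda):\lambda\in\Lambda\}$; note that $T_\Lambda$ is a countable subset of $\C$.

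Next, for each non-trivial reduced word $w$ in the free group $F_2$ on two letters, I consider the polynomial map $t_w\colon\SL_2(\C)^2\to\C$, $t_w(\gamma_1,\gamma_2)=\Tr(w(\gamma_1,\gamma_2))$. The key step is to verify that $t_w$ is non-constant. To see this, I fix any Schottky pair $(\gamma_1^0,\gamma_2^0)\in\SL_2(\R)^2$ whose images in $\PSL_2(\R)$ are two loxodromic isometries of $\mathbb{H}^2$ satisfying the ping-pong criterion; by ping-pong, every non-trivial reduced word in $\gamma_1^0,\gamma_2^0$ is loxodromic, so its trace has modulus strictly greater than $2$. Combined with $t_w(\Id,\Id)=2$, this shows that $t_w$ is non-constant. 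Consequently, for any $c\in\C$, the fiber $B_{w,c}:=t_w^{-1}(c)$ is a proper complex analytic subset of the connected complex manifold $\SL_2(\C)^2$, hence closed and nowhere dense for the Euclidean topology.

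Finally, set
\[
\mathcal{F}:=\SL_2(\C)^2 \setminus \bigcup_{w}\bigcup_{c\in T_\Lambda} B_{w,c},
\]
the outer union running over all non-trivial reduced words $w\in F_2$. This is a countable intersection of open dense subsets of the locally compact Hausdorff (hence Baire) space $\SL_2(\C)^2$, and is therefore a dense $G_\delta$. For any $(\gamma_1,\gamma_2)\in\mathcal{F}$ and any non-trivial reduced $w$, one has $\Tr(w(\gamma_1,\gamma_2))\notin T_\Lambda$; in particular $w(\gamma_1,\gamma_2)\neq \Id$ (since $2=\Tr(\Id)\in T_\Lambda$), so $\gamma_1,\gamma_2$ generate a free group of rank $2$, and moreover $w(\gamma_1,\gamma_2)$ is not $\SL_2(\C)$-conjugate to any element of $\Lambda$, so $L_{w(\gamma_1,\gamma_2)}$ acts freely on $M$. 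The principal obstacle is the non-constancy of $t_w$, which is the one place where the combinatorics of $F_2$ interacts with the geometry; after that, the proof is a Baire argument formally parallel to the one used for Hopf surfaces in \S\ref{par:example_Hopf}.
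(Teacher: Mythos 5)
Your proposal is correct, and it is organized differently from the paper's proof. The paper first proves the existence statement by a compactness argument: for $\gamma\in\SU_2(\C)$, if $g^{-1}\gamma g\in\Lambda$ then the iterates $g^{-1}\gamma^n g$ form a bounded subset of the discrete group $\Lambda$, so $\gamma$ has finite order; hence every non-abelian free subgroup of $\SU_2(\C)$ (which exists) acts freely on $M$, and the paper then simply asserts that the dense $G_\delta$ statement "follows from the theorem of Baire." You invert the logic: you prove the $G_\delta$ statement first and deduce existence from non-emptiness of a dense $G_\delta$. The two approaches trade off as follows. The $\SU_2$ argument is shorter and produces an explicit, large family of examples with no genericity needed. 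Your trace argument, on the other hand, supplies precisely the detail the paper leaves implicit in its appeal to Baire: the naive bad set $\{(\gamma_1,\gamma_2): w(\gamma_1,\gamma_2)\ \text{is conjugate into}\ \Lambda\}$ is not obviously closed, since conjugacy classes in $\SL_2(\C)$ need not be closed (the closure of the unipotent class contains $\Id$), whereas your enlargement to the trace fibers $t_w^{-1}(T_\Lambda)$ is a countable union of closed analytic sets. Your verification that $t_w$ is non-constant (comparing $t_w(\Id,\Id)=2$ with the trace of a loxodromic word at a Schottky pair in $\SL_2(\R)$) correctly establishes that these fibers are proper, hence nowhere dense, and the rest of your argument (freeness from $\Tr\neq 2$, fixed-point-freeness from non-conjugacy into $\Lambda$) is sound. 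So your proof is complete and, if anything, more self-contained than the paper's on the second assertion.
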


\begin{proof}
Let $\gamma$ be an element of $\SU_2(\C)$. If $g^{-1}  \gamma  g\in \Lambda$ 
its iterates $g^{-1} \gamma^n g$ form a bounded sequence in the discrete group $\Lambda$ so that $\gamma$ has finite order. Thus, any free subgroup of $\SU_2(\C)$ acts freely 
on $M$ by left translation. The second assertion follows from the theorem of Baire. 
\end{proof}

\bibliographystyle{plain}
 
\bibliography{references}

\end{document}